\documentclass[11pt,reqno]{amsart}
\usepackage[T1]{fontenc}
\usepackage{lmodern}
\usepackage{amsmath,amssymb,mathtools}
\usepackage[a4paper,margin=29mm]{geometry}
\usepackage{microtype}
\usepackage{enumitem}
\usepackage{xcolor}
\definecolor{linkblue}{RGB}{30,65,100}
\usepackage[colorlinks=true,linkcolor=linkblue,citecolor=linkblue,urlcolor=linkblue]{hyperref}
\usepackage{bookmark}
\newcommand{\C}{\mathbb C}
\newcommand{\R}{\mathbb R}
\newcommand{\CP}{\mathbb {CP}}
\newcommand{\ddc}{dd^c}
\newcommand{\ddV}{dV_E}
\newcommand{\omegaE}{\omega_E}
\newcommand{\omegaFS}{\omega_{\mathrm{FS}}}
\DeclareMathOperator{\Ric}{Ric}
\DeclareMathOperator{\Vol}{Vol}
\DeclareMathOperator{\diam}{diam}
\DeclareMathOperator{\tr}{tr}
\DeclareMathOperator{\Scal}{Scal}
\DeclareMathOperator{\Tan}{Tan}
\DeclareMathOperator{\AVR}{AVR}
\DeclareMathOperator{\Rm}{Rm}

\newtheorem{theorem}{Theorem}[section]
\newtheorem{proposition}[theorem]{Proposition}
\newtheorem{lemma}[theorem]{Lemma}
\newtheorem{corollary}[theorem]{Corollary}
\theoremstyle{definition}

\theoremstyle{remark}
\newtheorem{remark}[theorem]{Remark}
\numberwithin{equation}{section}
\setlist[enumerate,1]{label=\textup{(\roman*)},leftmargin=*,itemsep=3pt}
\allowdisplaybreaks[2]
\title[Nonunique tangent cones at infinity]
{A complete K\"ahler metric on $\C^n$ of nonnegative Ricci curvature with Euclidean volume growth and nonunique asymptotic cone}
 \author{Jiwei Dai}
\address{School of Mathematical Sciences, East China Normal University, Shanghai 200241, China}
\email{jiweidaimath@gmail.com}
\author{Wancheng Zhang}
\address{School of Mathematical Sciences, Zhejiang University, Hangzhou 310058, China}
\email{12335021@zju.edu.cn}
\begin{document}
\begin{abstract}
We construct complete smooth K\"ahler metrics on $\C^n$, $n\ge2$,
with nonnegative Ricci curvature, Euclidean volume growth, and
nonunique tangent cones at infinity. More generally, we prescribe
the entire tangent  cone set within a class of smooth Hopf cones, using
a compact connected family of positive transverse Ricci forms on
$\CP^{n-1}$.  
We identify all metric blow-downs and their Ricci currents, compute
the exact asymptotic volume ratio, and classify 
isometries between the model cones. In particular, the K\"ahler
condition does not ensure uniqueness at infinity under nonnegative
Ricci curvature and Euclidean volume growth, even in complex dimension two.
\end{abstract}
\maketitle
\tableofcontents
\clearpage

\section{Introduction}
\label{main:section}

Tangent cones at infinity  describe the geometry of a complete noncompact manifold at large scales. For
noncollapsed Gromov--Hausdorff limits of manifolds with a uniform
lower Ricci curvature bound, the work of Cheeger and Colding
\cite{CheegerColding1996,CheegerColding1997} shows that tangent cones
at a point are metric cones. Their theory also implies that
blow-down limits of a complete manifold with nonnegative Ricci
curvature and Euclidean volume growth are metric cones. These
cone-structure results do not imply uniqueness of the limit.

The first nonuniqueness example under positive Ricci curvature and
Euclidean volume growth is due to Perelman \cite{Perelman1997}.
His four-dimensional construction varies the geometry of the
cross sections slowly along the radial direction and has
nonisometric asymptotic cones. Colding and Naber
\cite{ColdingNaber2013} subsequently developed a family-realization
construction for tangent cones of noncollapsed Ricci limit spaces.
Their local realization theorem has equal-volume, Ricci lower
bound, and Ricci-closability hypotheses. Their examples at infinity
include manifolds whose tangent cones have different Euclidean
splitting behavior, and a five-dimensional manifold with
nonhomeomorphic tangent cones
\cite[Theorems~1.1 and~1.4]{ColdingNaber2013}.

Additional geometric equations can enforce uniqueness. For example,
Colding and Minicozzi \cite{ColdingMinicozzi2014} proved uniqueness
at infinity for a Ricci-flat manifold with Euclidean volume growth
when one tangent cone has a smooth cross section. In the
K\"ahler setting, Liu \cite{Liu2024} obtained a rigidity theorem
from the existence of a Ricci-flat tangent cone under additional
curvature-decay assumptions. These results leave open the
possibility of nonuniqueness when all tangent cones have nonzero
Ricci curvature.

Nonuniqueness at infinity already occurs in K\"ahler geometry without
Euclidean volume growth. Hattori \cite{Hattori2017} constructed complete
four-dimensional hyper-K\"ahler manifolds with nonunique asymptotic
limits and non-Euclidean volume growth. These examples lie outside
the noncollapsed setting considered here. Closely related earlier examples in the noncollapsed local setting
were constructed by Karmakar
\cite[Theorem~1.2 and Appendix~A, Theorem~A.1]{Karmakar2026}.
In this paper, Karmakar produced smooth polarized K\"ahler metrics on $\CP^2$, in one fixed
integral K\"ahler class, with a uniform Ricci lower bound and uniform
noncollapse, whose Gromov--Hausdorff limit has a point admitting a
continuous family of pairwise nonisometric tangent cones. These cones
have the same underlying affine variety $\C^2$, the same Hopf/Reeb
action, and the same volume density. His construction uses slowly
varying Hopf cone potentials, a logarithmic radial Ricci correction,
and caps obtained by prescribing radial Ricci potentials. These
ingredients are closely related to those used in the present paper.

Our results concern a single globally smooth complete metric on the
standard complex manifold $\C^n$, with nonnegative Ricci curvature
everywhere and nonunique tangent cones at infinity. More generally,
we prescribe the entire tangent set by an arbitrary nonempty compact
connected family of smooth strictly positive transverse Ricci forms
in a fixed Hopf class. The global filling must preserve nonnegative
Ricci curvature while controlling its error relative to the positive
radial Ricci margin.

In this paper, we construct K\"ahler examples with nonunique
tangent cones at infinity. Our simplest conclusion is the following.

\begin{theorem}[Nonunique tangent cones at infinity]\label{main:nonunique}
For every $n\ge2$ and $a\in(0,1)$, there is a complete smooth
K\"ahler metric on $\C^n$ with nonnegative Ricci curvature,
asymptotic volume ratio $a^n$, and at least two nonisometric
tangent cones at infinity.
\end{theorem}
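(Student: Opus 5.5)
We build the metric as $\omega=\ddc\phi$ for a $U(1)$-invariant (Hopf-invariant) potential $\phi$ on $\C^n$ that, in logarithmic radial variable $t=\log|z|$, interpolates slowly between two Hopf cone models. A Hopf cone over $\CP^{n-1}$ with cone angle parameter $a$ has potential $\phi_0=r^{2a}$ in the standard case. Its cross section is a Sasakian manifold $S^{2n-1}$ with Reeb field the Hopf field, scaled so that the transverse metric is $a\,\omega_T$ for a transverse Kähler form $\omega_T\in[\omegaFS]$. Volume is $a^n$ times Euclidean independently of $\omega_T$, since the volume of a Hopf cone depends only on the Reeb field and the class. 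Ricci nonnegativity of the cone is equivalent to $\Ric^T\ge 2n\,a\,\omega_T$ transversally. When $a<1$ the standard Fubini--Study transverse metric gives $\Ric^T(\omegaFS)=n\,\omegaFS\cdot(\text{normalization})$, and this strictly exceeds the cone requirement, leaving a positive margin proportional to $1-a$. We pick two transverse Kähler forms $\omega_0=\omegaFS$ and $\omega_1=\omegaFS+\ddc_{\CP^{n-1}} u$, with $u$ small in $C^\infty$ and $\omega_1$ not isometric to $\omega_0$ (e.g. $\omega_1$ not $U(n)$-homogeneous). By openness, both have Ricci strictly above the cone threshold, and the same holds for every $\omega_s=\omegaFS+s\,\ddc u$, $s\in[0,1]$.

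The candidate potential is $\phi=e^{2at}\bigl(1+\epsilon\,\chi(t)\,u\bigr)$, or more precisely a potential whose level-$t$ transverse form is $\omega_{s(t)}$. Here $s(t)$ oscillates between $0$ and $1$ on ever longer intervals, with $|s'|,|s''|\to0$, e.g. $s(t)=\tfrac12(1+\sin(\log t))$ for large $t$. Near the origin we cap off: for $t\le T_0$ we take $s\equiv0$, and then we replace $r^{2a}$ by a smooth convex radial function $f(r^2)$ that equals $r^2$ near $0$. The Calabi-type radial ansatz gives Ricci $\ge0$ provided the radial profile is chosen with decreasing logarithmic slope from $1$ to $a$. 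This filling is standard: for $U(n)$-invariant metrics, $\Ric\ge0$ is equivalent to monotonicity of the cone angle function, and it holds here because the angle goes from $1$ to $a<1$.

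The key steps are as follows.
\begin{enumerate}
\item Compute $\Ric(\omega)$ for $\phi$ with slowly varying transverse data. Show that $\Ric=\Ric_{\text{cone}(s(t))}+O(|s'|+|s''|)$ in scale-invariant norms. Using the strict transverse margin, show that the cone term is $\ge c(1-a)\,\omega$ on transverse directions. In radial directions, the Ricci of a Hopf cone is degenerate (zero), so we add a logarithmic radial correction $\phi\mapsto\phi\cdot(1+\delta/\log r)$-type term. It contributes a positive radial Ricci of order $1/(r^2\log^2 r)$, which must dominate the errors $|s'|/r^2$ and $|s''|/r^2$. We therefore choose $s$ with $|s'|+|s''|\ll 1/t^2$ in $t$, e.g. $s=\tfrac12(1+\sin(\log\log t))$, which still oscillates infinitely often.
\item Show completeness and Euclidean volume growth, and compute $\AVR=a^n$. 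This follows because the metric is uniformly equivalent to $r^{2a}$-cone metrics at large scales, and the volume of each model Hopf cone is $a^n$ (Reeb field fixed). The log corrections are $o(1)$.
\item Identify blow-downs. Rescaling by $R_i=e^{t_i}$ with $s(t_i)\to s_*$ converges smoothly away from the vertex to the Hopf cone $C(\omega_{s_*})$, since $s$ varies slowly on logarithmic scales. Taking $s_*=0$ and $s_*=1$ gives two cones. They are nonisometric: an isometry of cones preserves the cross sections, and the Reeb field is determined as $J$ of the radial field, so the isometry induces an isometry $(\CP^{n-1},\omega_0)\cong(\CP^{n-1},\omega_1)$. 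This fails if, for instance, the scalar curvature of $\omega_1$ is nonconstant.
\end{enumerate}

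The main obstacle is step (i): getting positive radial Ricci from the logarithmic correction that beats all mixed and radial error terms coming from $s(t)$, while keeping the mixed radial--transverse Ricci components controlled (Cauchy--Schwarz against the product of the radial and transverse margins). The first attempt is to write everything through $\Ric=-\ddc\log\det$, use the Monge--Ampère form of the Hopf ansatz, and bound the off-diagonal terms by $|s'|$ against $\sqrt{(1/\log^2 r)\cdot(1-a)}$, which requires $|s'|\ll 1/t$.
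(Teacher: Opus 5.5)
Your architecture matches the paper's. You use a Hopf-invariant potential $|z|^{2a}e^{\phi}$ with slowly varying transverse data, and a logarithmic radial factor whose radial Ricci margin dominates the mixed terms by a Cauchy--Schwarz (Schur-complement) bound. This is followed by a radial cap, blow-downs, and a scalar-curvature invariant. Prescribing the transverse metrics $a\omega_s$ directly, instead of prescribing $\alpha$ and solving the twisted equation, is fine for this theorem; Corollary~\ref{geom:nonunique} does the same.

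The gap is the radial correction itself, which is the crux of step (i). Write the radial profile as $e^{2at}g(t)$. Up to a fixed factor and to leading order, the radial Ricci coefficient is then $-n(\log g)''$. For $g=1+\delta/t$ this is $\approx-2n\delta/t^3$: one power of $\log r$ too small, and negative for $\delta>0$. With a margin of order $t^{-3}$, your own requirement $|s'|^2\lesssim(\text{margin})$ gives $|s'|\lesssim t^{-3/2}$. Then $\int|s'|\,dt<\infty$, so $s(t)$ converges and the tangent cone is unique. No bounded correction does better: if $-n(\log g)''\ge c\,t^{-2}$ and $(\log g)'\to0$, then $(\log g)'\ge c/(nt)$, hence $g\gtrsim t^{c/n}$. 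The correction must therefore be the unbounded factor $(L+\log r^2)^b$, i.e.\ the paper's $V=e^{as}\sigma^b$, whose Ricci potential contains $-nb\log\sigma$. With $b$ large relative to the path bounds, the clock $\log\log r$ (your $\sin\log t$) already works. Your stated condition $|s'|\ll t^{-2}$ would itself forbid oscillation. The correct conditions are $|s'|\lesssim t^{-1}$ and $|s''|+|s'|^2\lesssim t^{-2}$, with the constants absorbed by choosing $b$ large.

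With $\sigma^b$ in place, step (ii) and the cap must be redone, because $g\asymp r^{2a-2}\sigma^b g_E$ is not uniformly equivalent to any fixed cone and the factor is not $o(1)$. The volume ratio must be computed scale-invariantly, as in Theorem~\ref{lim:tangents} and Proposition~\ref{lim:volume}:
\begin{enumerate}
\item rescale by $\rho(R)=R^a\sigma(R)^{b/2}$ rather than $R^a$;
\item prove smooth convergence on annuli;
\item control the rescaled core ($\diam$ of order $\delta^a$ and $\Vol$ of order $\delta^{2na}$ for $B^E_\delta$) to obtain pointed Gromov--Hausdorff and unit-ball volume convergence;
\item conclude by Bishop--Gromov monotonicity.
\end{enumerate}
The same core control is what turns your annulus convergence in (iii) into a tangent-cone statement.

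The cap must fill $e^{as}\sigma^b$, not $r^{2a}$. For $U(n)$-invariant metrics, $\Ric\ge0$ means $f_c=-\log\det$ is nondecreasing and convex in $\log|z|^2$. Reaching the exact cone first (where $f_c'=\kappa$) and only then acquiring $\sigma^b$ (where $f_c'=\kappa-nb/\sigma+\dots$) forces $f_c''<0$ in between. Moreover, the radial Monge--Amp\`ere equation recovers the cap from $f_c$ only up to a first-integral constant (the paper's $B_L$). You must either tune the interior profile to cancel it, or do as the paper does: start the transverse variation only at $\log|z|^2\ge(e-1)L$, where the resulting error is below the margin in the $\sigma$-weighted frame.

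Finally, in (iii) a Riemannian isometry is not known a priori to preserve $J$. Also, failure of $U(n)$-invariance does not rule out $\omega_1=P^*\omegaFS$ for a projective $P$. Use instead the cone scalar curvature $4(S(\beta)-nm)/\rho^2$ on the unit distance sphere, with $u$ an eigenfunction of eigenvalue greater than $n$, as in Corollary~\ref{geom:nonunique}.
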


In particular, on the standard complex manifold $\C^2$ there is a
complete K\"ahler metric with nonnegative Ricci curvature,
fourth-order volume growth, and at least two nonisometric tangent
cones at infinity. Thus the K\"ahler condition, even together with
nonnegative Ricci curvature and Euclidean volume growth, does not
ensure uniqueness at infinity. The examples also have quadratic
curvature decay, as proved in Corollary~\ref{lim:curvature}.

We prove a more general realization theorem. It prescribes the
whole tangent set in a fixed class of cones associated with the
standard Hopf fibration. We first specify the data used in its
statement. Let $Y=\CP^{n-1}$, and let
$\omega_0=\omegaFS$ with $\Ric(\omega_0)=n\omega_0$.
We use the differential and metric normalizations of
Section~\ref{prelim:kahler}; in particular
$g_\omega=\tfrac12\omega(\,\cdot\,,J\,\cdot\,)$.

Fix
\begin{equation}\label{main:parameters}
 \begin{gathered}
 0<a<1,\qquad \kappa=n(1-a),\\
 \mathcal P_\kappa=
 \{\alpha\in C^\infty(Y,\Lambda^{1,1}_{\R}):
 d\alpha=0,\ \alpha>0,\ [\alpha]=\kappa[\omega_0]\}.
 \end{gathered}
\end{equation}
The topology on $\mathcal P_\kappa$ is the $C^\infty$ topology.
For $\alpha\in\mathcal P_\kappa$, Proposition~\ref{geom:transverse} determines $\phi_\alpha$ and a K\"ahler form
$\beta_\alpha=a\omega_0+\ddc_Y\phi_\alpha$ with
\[
 \Ric(\beta_\alpha)=n\beta_\alpha+\alpha.
\]
Let $(C_\alpha,o_\alpha)$ be the completion at the origin of
$(\C^n\setminus\{0\},g_{\omega_\alpha})$, where
\begin{equation}\label{main:frozen}
 \omega_\alpha=\ddc V_\alpha,\qquad
 V_\alpha(z)=|z|^{2a}e^{\phi_\alpha([z])}.
\end{equation}
Proposition~\ref{geom:cone} shows that this completion is a metric cone. A constant added to
$\phi_\alpha$ gives a pointed isometric cone.

For a complete pointed manifold $(M,g,o)$, we use the notation $\Tan_\infty(M,g,o)$
for the set of isometry classes of tangent cones at infinity.

\begin{theorem}\label{main:theorem}
Let $\mathcal A$ be a nonempty compact connected subset of
$\mathcal P_\kappa$. There exists a smooth strictly plurisubharmonic
function $\mathcal U:\C^n\to\R$, with $\mathcal U(0)=0$, such that
$\omega=\ddc\mathcal U$ and $g=g_\omega$ have the following properties.
\begin{enumerate}
\item The metric $g$ is complete, $\Ric(g)\ge0$ everywhere, and
$g$ is a positive constant multiple of the Euclidean metric near zero.
Its Ricci curvature is strictly positive outside a compact set.
\item The entire tangent set is
\[
 \Tan_\infty(\C^n,g,0)=
 \{[C_\alpha,o_\alpha]:\alpha\in\mathcal A\}.
\]
The pointed real isometry equivalences in this set are precisely
projective holomorphic and projective antiholomorphic equivalence,
as specified in Theorem~\ref{geom:classification}.
\item The normalized asymptotic volume ratio is
\[
 \AVR(g):=\lim_{R\to\infty}
 \frac{\Vol_g B_g(0,R)}{v_{2n}R^{2n}}=a^n.
\]
In particular,
\[
 \Vol_g B_g(p,R)\ge a^n v_{2n}R^{2n}
 \quad\text{for every }p\in\C^n,\ R>0.
\]
\item There are $b,L>0$ and a smooth path $\alpha(\tau)$ with
cluster set $\mathcal A$ such that the following assertion holds.
If $r_j\to\infty$ and
\[
 \alpha\!\left(\log\frac{L+\log r_j^2}{L}\right)
       \longrightarrow\alpha_\infty
       \quad\text{in }C^\infty,
 \qquad
 \ell_j=r_j^a(L+\log r_j^2)^{b/2},
\]
then
\[
 (\C^n,\ell_j^{-2}g,0)
       \xrightarrow{\mathrm{pGH}}(C_{\alpha_\infty},o_{\alpha_\infty}),
 \qquad
 \ell_j^{-2}D_{r_j}^*\omega
       \longrightarrow\omega_{\alpha_\infty}
       \quad\text{in }C^\infty_{\mathrm{loc}}(\C^n\setminus\{0\}),
\]
where $D_r(z)=rz$. Moreover
\begin{equation}\label{main:currents}
 D_{r_j}^*\Ric(\omega)\rightharpoonup
 T_{\alpha_\infty}
 :=\ddc\bigl(\kappa\log|z|^2+\psi_{\alpha_\infty}([z])\bigr)
\end{equation}
as currents on all of $\C^n$. Here
$\alpha=\kappa\omega_0+\ddc_Y\psi_\alpha$ and
$\int_Y\psi_\alpha\omega_0^{n-1}=0$.
The locally integrable potential in \eqref{main:currents}
defines the extension of $\pi^*\alpha_\infty$ across zero.
\end{enumerate}
\end{theorem}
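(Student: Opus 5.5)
The construction is a slowly varying Hopf-cone potential with a logarithmic radial correction, glued to a Euclidean cap near the origin. Set $t=\log|z|^2$ and $\tau=\log\frac{L+t}{L}$. Choose a smooth path $\alpha(\tau)$ in $\mathcal P_\kappa$ whose cluster set as $\tau\to\infty$ is exactly $\mathcal A$. Since $\mathcal A$ is compact and connected, one can take a path that travels back and forth through a dense sequence in $\mathcal A$. The speed is reparametrized so that all $C^k$ norms of $\partial_\tau^j\alpha$ stay uniformly bounded; connectedness lets consecutive points be joined inside a small neighbourhood of $\mathcal A$. Proposition~\ref{geom:transverse} depends smoothly on $\alpha$, so it gives a smooth family $\phi(\tau)=\phi_{\alpha(\tau)}$ with uniform bounds.

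For large $|z|$ the potential is taken of the form
\[
\mathcal U=e^{\,a t+\phi(\tau)([z])}\,(L+t)^{b},
\]
and in the region $t\le t_0$ it is modified to $c|z|^2$ plus a convex radial interpolation.

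Main computation. Write $\mathcal U=V\cdot F(t)$, where $V$ is the frozen cone potential. Every $t$-derivative of $\phi(\tau)$ carries a factor $(L+t)^{-1}$. With this, $\omega=\ddc\mathcal U$ equals the frozen cone form $\omega_{\alpha(\tau)}$ times $(L+t)^b$, corrected by a positive radial term of relative size $b(L+t)^{-1}$ and by errors of size $O((L+t)^{-1})$ in the directions tangent to the cross section. This gives positivity for large $L$.

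The Ricci form splits as
\[
\Ric(\omega)=\pi^*\alpha(\tau)+\ddc\bigl(-\log(L+t)^{nb}-\log\text{(correction)}\bigr).
\]
The transverse part is uniformly positive by compactness of $\mathcal A$. The radial part is $-\ddc\log(\text{powers of }(L+t))$. On functions of $t$ alone this is $\tfrac{c}{(L+t)^2}$ times $i\partial t\wedge\bar\partial t$, which is positive; I choose $b$ so that this sign is correct. I expect this to be the hardest step: the errors from $\partial_\tau\phi$ are $O((L+t)^{-2})$ in the radial direction and must be dominated by this positive radial margin. The mixed radial–transverse terms are $O((L+t)^{-1})$ and must be absorbed by Cauchy–Schwarz against the product of the $O(1)$ transverse margin and the $(L+t)^{-2}$ radial margin. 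This works if the path speed constant $\sup|\partial_\tau\phi|$ is small compared with $b$, possibly after slowing the path further, that is, after replacing $\tau$ by $\tau/N$.

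The inner gluing is done radially in the style of Calabi, prescribing a radial Ricci potential that interpolates between $0$ and the outer one while keeping $\Ric\ge0$. This yields $\Ric>0$ outside a compact set and a Euclidean metric near $0$.

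Asymptotics. Pulling back by $D_{r_j}$ and dividing by $\ell_j^2$, the rescaled potential $\ell_j^{-2}D_{r_j}^*\mathcal U$ converges in $C^\infty_{\mathrm{loc}}(\C^n\setminus\{0\})$ to $V_{\alpha_\infty}$. The reason is that $\tau$ varies by $o(1)$ on compact annuli, and $(L+t)^b$ is slowly varying with ratio tending to $1$. Smooth convergence on annuli, uniform two-sided comparison with the cone, and completeness together upgrade this to pointed Gromov–Hausdorff convergence. Small balls around the origin have small diameter uniformly in $j$, by the radial distance estimate $d\sim\mathcal U^{1/2}$. This gives (iv) and the inclusion ``$\supseteq$'' in (ii), because every $\alpha_\infty\in\mathcal A$ is attained along some sequence.

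Conversely, any blow-down sequence has a subsequence along which $\alpha(\tau_j)$ converges, so every tangent cone is of the form $C_{\alpha_\infty}$ with $\alpha_\infty\in\mathcal A$. The isometry classification is Theorem~\ref{geom:classification}. The Ricci currents converge because the local potentials $\kappa\log|z|^2+\psi$ converge in $L^1_{\mathrm{loc}}$.

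For (iii), every cone $C_\alpha$ has volume ratio $a^n$: its volume form is $a^n$ times that of the Euclidean cone, since $\int_Y\beta_\alpha^{n-1}=a^{n-1}\int\omega_0^{n-1}$ and the Reeb radial factor contributes $a$. Volume convergence (Colding) then gives $\AVR=a^n$, and Bishop–Gromov gives the global lower bound.
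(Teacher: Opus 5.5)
Your exterior computation is essentially the paper's. Slowing the path is equivalent to the paper's choice of a large $b$ in the Schur-complement condition. Invoking Colding's volume convergence for (iii) is an acceptable substitute for the paper's explicit core-volume estimate.

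The genuine gap is the filling. You treat it in one sentence, but it is the delicate step of the construction.

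First, $e^{at+\phi(\tau)}(L+t)^b$ is not radial in the gluing region unless the path is stationary there. You must prepend a constant piece $\alpha(\tau)=\kappa\omega_0$, as in Lemma~\ref{geom:path}(i).

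Second, neither of your two descriptions of the cap produces the claimed metric.
\begin{itemize}
\item A convex radial interpolation of potentials only guarantees $\ddc\mathcal U>0$. The Ricci form $-\ddc\log\det(\mathcal U_{j\bar k})$ involves four derivatives of the potential and is not controlled by convexity.
\item The Calabi-type alternative prescribes a convex nondecreasing radial $f_c$, equal to the exterior radial Ricci potential for $t\ge-1$, and solves $e^{-nt}(U')^{n-1}U''=e^{-f_c}$. This does give $\Ric\ge0$ on the cap. However, smoothness at the origin fixes the integration constant, and for $t\ge0$ one only gets $(U_c')^n=(V')^n+B_L$, with $V=e^{at}(L+t)^b$ and, in general, $B_L\neq0$.
\end{itemize}
So the cap does not reproduce the exterior potential. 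You would have to show separately that $f_c$ can be tuned to make $B_L=0$; this is not automatic, and you do not address it.

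Consequently, once the transverse variation starts, the actual potential is $U_c+V(e^{\phi}-1)=U_{\mathrm{ext}}+C_L+h$, not $U_{\mathrm{ext}}$. Its Ricci form is $\Ric(\ddc\mathcal U)=\Ric(\omega_{\mathrm{ext}})+\ddc J$, with $J=-\log\bigl(\det(G+Q)/\det G\bigr)$.

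The exterior lower bound \eqref{ext:ricci-margin} is uniform only in the frame $\operatorname{diag}(I,\sigma)$. There the radial margin is of order $\sigma^{-2}$, so the mixed and radial blocks of $\ddc J$ must be bounded by a small multiple of $\sigma^{-1}$ and $\sigma^{-2}$ respectively, uniformly in $L$. That requires bounds on $h$ through four derivatives, not just smallness of $h$.

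The paper obtains these bounds as follows.
\begin{itemize}
\item It uses the exact identity \eqref{cap:correction-derivative} together with $|B_L|\le C_bL^{nb}$.
\item It delays the variation to $s\ge S_0=(e-1)L$, so that $\sup_{s\ge S_0}\sigma^2\varepsilon_L\le C_bL^2e^{-na(e-1)L}$.
\item This same estimate is what allows $L$ to be chosen last without circularity, even though the cap itself depends on $L$.
\end{itemize}
Without this step, your assertion that the glued metric has $\Ric\ge0$ everywhere and $\Ric>0$ outside a compact set is unproved.

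A smaller omission is in (iv). Convergence of the Ricci currents across the origin needs a uniform locally integrable majorant for the normalized potentials $f_g(R_jz)-\kappa s_j+nb\log\sigma_j$, such as $C(1+|\log|z||)$. Convergence of the limiting potentials $\kappa\log|z|^2+\psi$ alone does not suffice.
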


We now describe the construction. The compact transverse equation
associates a cone to each positive form $\alpha$. We choose a smooth
path of such forms whose cluster set is the prescribed family.
On the end of $\C^n$, the corresponding potentials are inserted at
the slow time
\[
 \tau=\log\frac{L+\log r^2}{L}.
\]
Their variation tends to zero on every annulus of fixed radius
ratio. The extra factor $(L+\log r^2)^b$ supplies a positive radial
Ricci term of order $(L+\log r^2)^{-2}$. This term controls the
mixed Ricci terms created by the changing transverse geometry.

The remaining step is to fill the origin while preserving this
Ricci margin. We first extend the radial Ricci potential and then
recover a K\"ahler potential by an explicit radial
Monge--Amp\`ere equation. We delay the transverse variation until
the filling error, including the derivatives entering the Ricci
form, is smaller than the radial margin. The metric estimates then
give completeness, a shrinking-core estimate, and control of
minimizing curves. Together with smooth convergence on annuli,
these estimates identify every pointed metric blow-down.

The slow variation of cone links is motivated by
\cite{Perelman1997,ColdingNaber2013}. The K\"ahler construction
requires a global potential and a filling compatible with its
Ricci form. Both are established directly here. The prescribed
data are smooth strictly positive forms for the standard Hopf
action; we make no assertion for arbitrary singular transverse
currents or for other Reeb actions.

Section~\ref{prelim:section} records the K\"ahler conventions and
the compact analytic input. Section~\ref{geom:section} constructs
the model cones and the parameter path, and distinguishes their
pointed isometry classes. Section~\ref{ext:section} proves the
exterior Ricci estimate. Section~\ref{cap:section} constructs the
smooth filling. Section~\ref{lim:section} identifies the metric
and current limits and proves the volume and curvature estimates. 

 \textbf{AI usage.} The main idea of this paper originated with Wenshuai Jiang and Gang Liu, who suggested exploiting symmetry in the Kähler potential and constructing such potentials by solving suitable PDE. Following their suggestion, we work with potentials invariant under the $S^1$ action, which allows us to separate the radial and transverse geometry.   GPT-6.1 Sol was used as an auxiliary tool for mathematical discussions, manuscript organization, and refinement of the exposition and language. The authors take full responsibility for the mathematical content and the final manuscript.
\section{Preliminaries}
\label{prelim:section}

We record the K\"ahler conventions and the compact existence and
uniqueness results used to construct the transverse metrics. We
also recall the Ricci-flat rigidity theorem that restricts the
possible tangent sets.

\subsection{K\"ahler potentials and the Ricci form}
\label{prelim:kahler}

Fix $n\ge2$, put $m=n-1$, and let $Y=\CP^{n-1}$ with
\[
 \omega_0=\omegaFS,\qquad
 \Ric(\omega_0)=n\omega_0,\qquad
 \pi^*\omega_0=\ddc\log|z|^2.
\]
Here $\pi:\C^n\setminus\{0\}\to Y$ is the standard projection.
Throughout this paper,
\begin{equation}\label{main:conventions}
 d^c=\frac i2(\bar\partial-\partial),\quad
 \ddc=i\partial\bar\partial,\quad
 \omegaE=\ddc|z|^2,\quad
 g_\omega=\tfrac12\omega(\,\cdot\,,J\,\cdot\,).
\end{equation}
Thus $g_{\omegaE}$ is the ordinary Euclidean metric.
For $\omega=iH_{j\bar k}\,dz^j\wedge d\bar z^k$,
\[
 dV_{g_\omega}=\det H\,\ddV,\qquad
 \omega^n=2^n n!\det H\,\ddV,\qquad
 \Ric(\omega)=-\ddc\log\det H.
\]
The sign of the Ricci form agrees with that of the real Ricci tensor.
We write $v_{2n}$ for the Euclidean volume of the unit ball in $\R^{2n}$.

A real smooth function $U$ is strictly plurisubharmonic when
$(U_{j\bar k})$ is positive definite; then $dd^cU$ is a K\"ahler
form. For two K\"ahler forms $\gamma_1,\gamma_2$ on a complex
$m$-manifold,
\begin{equation}\label{prelim:ricci-ratio}
 \Ric(\gamma_1)-\Ric(\gamma_2)
 =-dd^c\log\frac{\gamma_1^m}{\gamma_2^m}.
\end{equation}
In particular, $\Ric(c\gamma)=\Ric(\gamma)$ for $c>0$.
The Ricci class is $2\pi c_1$ in unscaled de Rham cohomology.
On $Y$ it equals $n[\omega_0]$.

The $\partial\bar\partial$-lemma on $Y$ identifies the parameter
space in \eqref{main:parameters} with normalized potentials:
\begin{equation}\label{geom:positive-class}
 \alpha=\kappa\omega_0+dd_Y^c\psi_\alpha>0,
 \qquad \int_Y\psi_\alpha\,\omega_0^m=0.
\end{equation}
Thus $\mathcal P_\kappa$ is the set of K\"ahler representatives
of one fixed positive class. It is a convex set of forms.
The $C^\infty$ topology means convergence in every smooth norm;
all compactness assertions for transverse data use this topology.

\subsection{The compact transverse equation}
\label{prelim:twisted}

The transverse Ricci data do not directly specify a metric.
We must solve $\Ric_Y\beta=n\beta+\alpha$ in the class
$a[\omega_0]$. Its cohomological compatibility is
$na+\kappa=n$. The analytic input is the existence criterion of
Sz\'ekelyhidi \cite[Theorem~1]{Szekelyhidi2011} for the compact
twisted K\"ahler--Einstein equation, together with Berndtsson's
twisted uniqueness theorem \cite[Theorem~7.1]{Berndtsson2015}.
We give the rescalings and hypotheses in the following application.

\begin{proposition}[Normalized transverse metrics]
\label{geom:transverse}
For every $\alpha\in\mathcal P_\kappa$ there is a unique K\"ahler
form $\beta_\alpha\in a[\omega_0]$ such that
\begin{equation}\label{geom:twisted-equation}
 \Ric_Y\beta_\alpha=n\beta_\alpha+\alpha.
\end{equation}
Let $\psi_\alpha$ be determined by
\[
 \alpha=\kappa\omega_0+\ddc_Y\psi_\alpha,
 \qquad \int_Y\psi_\alpha\,\omega_0^m=0.
\]
There is a unique normalized potential $\phi_\alpha$ with
\begin{equation}\label{geom:normalized-ma}
 \beta_\alpha=a\omega_0+\ddc_Y\phi_\alpha,
 \qquad
 \log\frac{\beta_\alpha^m}{(a\omega_0)^m}
       +n\phi_\alpha+\psi_\alpha=0.
\end{equation}
The solution depends smoothly on $\alpha$ in H\"older-space local
charts and continuously in the $C^\infty$ topology.
\end{proposition}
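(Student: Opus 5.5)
Rescale \eqref{geom:twisted-equation} to Sz\'ekelyhidi's normalization, check his existence criterion on $\CP^{m}$, take uniqueness from Berndtsson, and get smooth dependence from the implicit function theorem. Put $\beta=a\hat\beta$ with $\hat\beta\in[\omega_0]$. Since $\Ric(a\hat\beta)=\Ric(\hat\beta)$, the equation becomes
\[
 \Ric(\hat\beta)=na\,\hat\beta+\alpha=t\,(n\hat\beta)+(1-t)\,\frac{\alpha}{1-a},
 \qquad t=a.
\]
This is the twisted equation $\Ric\hat\beta=t\hat\beta'+(1-t)\tilde\alpha$ of \cite[Theorem~1]{Szekelyhidi2011}, with reference class $c_1$ scaled as $n[\omega_0]$. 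The twist is $\tilde\alpha=\alpha/(1-a)$, which lies in $n[\omega_0]$ because $\kappa=n(1-a)$, and it is a smooth positive form. Sz\'ekelyhidi's theorem gives solvability for every $t<R(Y)$, where $R(Y)$ is the greatest lower bound of Ricci curvature. On $\CP^{n-1}$ we have $R(Y)=1$, because the Fubini--Study metric is K\"ahler--Einstein. Since $t=a<1$, a solution exists. The twisted path of that paper allows an arbitrary smooth positive twist form; I will check this carefully while matching normalizations.

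For uniqueness, two solutions differ by a potential, and both satisfy $\Ric\beta-n\beta=\alpha$ with $\alpha>0$ a strictly positive twist. Berndtsson's theorem \cite[Theorem~7.1]{Berndtsson2015} states uniqueness for twisted K\"ahler--Einstein metrics with a semipositive twist that is strictly positive somewhere, or with trivial holomorphic vector fields. Its proof uses convexity of the twisted Ding functional along weak geodesics. Strict positivity of $\alpha$ kills the automorphism ambiguity, so $\beta_\alpha$ is unique. For the normalized potential, write $\beta_\alpha=a\omega_0+\ddc_Y\phi$ with $\phi$ determined up to a constant. Apply \eqref{prelim:ricci-ratio} with $\Ric(a\omega_0)=n\omega_0$:
\[
 -\ddc\log\frac{\beta^m}{(a\omega_0)^m}+n\omega_0 = n a\omega_0+n\ddc\phi+\kappa\omega_0+\ddc\psi_\alpha .
\]
The $\omega_0$ terms cancel because $na+\kappa=n$. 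The function $\log(\beta^m/(a\omega_0)^m)+n\phi+\psi_\alpha$ is therefore pluriharmonic, hence constant on compact $Y$. Since $n\ne0$, there is exactly one additive normalization of $\phi$ that makes this constant zero, which gives \eqref{geom:normalized-ma}.

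The step I expect to be the main obstacle is smooth dependence. I will consider
\[
 F(\phi,\psi)=\log\frac{(a\omega_0+\ddc\phi)^m}{(a\omega_0)^m}+n\phi+\psi
\]
as a map $C^{k+2,\gamma}\times C^{k,\gamma}\to C^{k,\gamma}$ on the open set where $a\omega_0+\ddc\phi>0$. Its linearization in $\phi$ at a solution is $\Delta_{\beta}+n$, where $\Delta_\beta$ is the complex Laplacian of $\beta$, trace against $\beta$, so its eigenvalues are negative. The key estimate is that the first nonzero eigenvalue of $-\Delta_\beta$ exceeds $n$. This follows from the Lichnerowicz argument, since $\Ric\beta=n\beta+\alpha>n\beta$ strictly: Bochner's formula gives $\lambda_1>n$, with strict inequality because $\alpha>0$ excludes equality. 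Hence $\Delta_\beta+n$ is invertible between H\"older spaces, and the implicit function theorem gives a smooth local solution map $\psi\mapsto\phi$. By uniqueness this local map coincides with the global one, so it is smooth in H\"older charts. The map $\alpha\mapsto\psi_\alpha$ is linear and continuous, since $\psi_\alpha$ is obtained by inverting the Laplacian with zero mean. Taking $k$ arbitrary, I obtain continuity in the $C^\infty$ topology.
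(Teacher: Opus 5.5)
Your proposal is correct and matches the paper's proof step for step. It uses the same rescaling $\Omega=(n/a)\beta$, $\eta=\alpha/(1-a)$ into Sz\'ekelyhidi's criterion (with $n\omega_0$ witnessing $R(Y)=1>a$), Berndtsson's twisted uniqueness with $\alpha>0$ killing the vector field, the pluriharmonic-constant normalization via $na+\kappa=n$, and the implicit function theorem with the Bochner bound $\lambda_1(-\Delta_\beta)\ge n+\epsilon$. The one detail the paper adds is that Berndtsson's theorem is applied to $n\beta$, which lies in the generally non-integral class $na[\omega_0]$; this is permissible because that theorem is formulated for real line bundles.
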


\begin{proof}
For existence set $\eta=\alpha/(1-a)$ and $\Omega=(n/a)\beta$.
Both forms lie in $n[\omega_0]$, the Ricci class of $Y$, and the
desired equation becomes
\[
 \Ric\Omega=a\Omega+(1-a)\eta.
\]
By \cite[Theorem~1]{Szekelyhidi2011}, this equation for a prescribed
positive reference form $\eta$ and $a<1$ is solvable whenever there
is a form $\Omega'$ in the Ricci class with
$\Ric\Omega'>a\Omega'$. Here $\Omega'=n\omega_0$ has
$\Ric\Omega'=\Omega'$. This verifies the hypothesis directly.

For uniqueness, $\widehat\Omega=n\beta$ satisfies
$\Ric\widehat\Omega=\widehat\Omega+\alpha$.
The twisted uniqueness theorem \cite[Theorem~7.1]{Berndtsson2015}
relates two solutions by the flow of a holomorphic vector field
$W$ with $W\mathbin{\lrcorner}\alpha=0$. Its assumptions hold:
$Y$ is Fano, $H^{0,1}(Y)=0$, and the twist is smooth and positive.
The theorem is formulated for real line bundles, so no integrality
of $a$ or $\kappa$ is required. Positivity of $\alpha$ forces $W=0$.
The $\partial\bar\partial$-lemma supplies the potentials. In particular,
\[
 \Ric_Y\beta=n\omega_0-\ddc_Y
 \log\frac{\beta^m}{(a\omega_0)^m}.
\]
Equation
\eqref{geom:twisted-equation} makes the left side of
\eqref{geom:normalized-ma} constant, and adding a constant to
$\phi_\alpha$ uniquely makes it zero.

The linearization in $\phi$ is
\begin{equation}\label{geom:linearization}
 L_\alpha v=\Delta_{\beta_\alpha}v+nv,
 \qquad \Delta_\beta v=\beta^{\mu\bar\nu}v_{\mu\bar\nu}.
\end{equation}
It is invertible. Put $\beta=\beta_\alpha$. Since $\alpha>0$,
there is $\epsilon>0$ such that
$\Ric\beta\ge(n+\epsilon)\beta$.
To fix the gradient normalization, define
\[
 \mathcal G_\beta v
 =\beta^{\mu\bar\nu}v_{\bar\nu}
   \frac{\partial}{\partial w^\mu}.
\]
Thus $(\nabla_{g_\beta}v)^{1,0}=2\mathcal G_\beta v$ under
our real metric convention. All tensor norms below are induced
by the Hermitian coefficients of $\beta$.
For a real eigenfunction satisfying
$-\Delta_\beta v=\lambda v$ with $\lambda>0$, the integrated
K\"ahler Bochner identity gives
\[
 0\le
 \int_Y|\bar\partial\mathcal G_\beta v|_\beta^2\,\beta^m
 \le
 \lambda(\lambda-n-\epsilon)\int_Yv^2\,\beta^m.
\]
Consequently every nonzero eigenvalue of $-\Delta_\beta$
is at least $n+\epsilon$, hence greater than $n$.
Constants have eigenvalue $n$ for $L_\alpha$.
Elliptic Fredholm theory and the implicit function theorem yield
smooth local dependence. The normalized map
$\alpha\mapsto\psi_\alpha$ is continuous in all smooth norms,
and uniqueness makes the local maps agree. This proves the assertion
at every H\"older order and hence in $C^\infty$.
\end{proof}

\begin{remark}[The equation on $\CP^1$]\label{prelim:surface}
For $n=2$, write $\alpha=f\omega_0$ and $\beta=u\omega_0$.
The class conditions are
\[
 f,u>0,\qquad
 \int_Yf\omega_0=2(1-a)\int_Y\omega_0,\qquad
 \int_Yu\omega_0=a\int_Y\omega_0.
\]
Since $\Ric_Y(u\omega_0)=2\omega_0-dd^c\log u$, the
transverse equation is a scalar equation on the sphere. In terms
of the normalized potentials it reads
\[
 a+\Delta_{\omega_0}\phi=a e^{-2\phi-\psi}.
\]
If $K_\beta$ is the Gaussian curvature of
$g_\beta=\tfrac12\beta(\,\cdot\,,J\,\cdot\,)$, our convention
gives $\Ric_Y\beta=(K_\beta/2)\beta$. Hence
$K_\beta=4+2f/u>4$. The standard solution is
$\alpha=2(1-a)\omega_0$, $\beta=a\omega_0$.
\end{remark}

\subsection{Blow-downs and Ricci-flat rigidity}
\label{prelim:rigidity}

For clarity, local tangent cones of a Ricci limit space are obtained
by distance rescalings $r_j^{-1}$ with $r_j\downarrow0$.
Tangent cones at infinity of a complete manifold are obtained
with $r_j\to\infty$. For a complete real $N$-manifold with
nonnegative Ricci curvature, Bishop--Gromov comparison makes
\[
 \frac{\Vol B(p,R)}{v_NR^N}
\]
nonincreasing in $R$. A positive limiting ratio is called Euclidean
volume growth. In this case the blow-down limits are noncollapsed
metric cones by \cite{CheegerColding1996,CheegerColding1997}.

Liu and Sz\'ekelyhidi \cite{LiuSzekelyhidi2022} establish
complex-analytic structure for noncollapsed limits of compact
polarized K\"ahler manifolds with a uniform lower Ricci bound and
bounded diameter: the limits are homeomorphic to normal projective
varieties. Their local coordinate theorem gives holomorphic coordinates
on a smaller concentric ball, provided that a sufficiently
larger ball is relatively compact, is sufficiently close to
a Euclidean ball in Gromov--Hausdorff distance, and satisfies
the required Ricci lower bound after rescaling
\cite[Theorem~1.4]{LiuSzekelyhidi2022}.
Here the complex coordinates are fixed globally on $\C^n$;
we identify the metric cones through explicit estimates on annuli.

Liu's rigidity theorem \cite[Theorem~5]{Liu2024} applies to a
complete K\"ahler manifold of complex dimension at least two with
nonnegative Ricci curvature and Euclidean volume growth. It assumes
either quadratic decay of $|\Rm|$, or that the K\"ahler form is
globally $dd^c$-exact and the scalar curvature has quadratic decay.
If one tangent cone at infinity is Ricci flat in the sense of
\cite[Definition~2]{Liu2024}, then the manifold is Ricci flat and
its tangent cone at infinity is unique. The cone hypothesis includes
a nonzero parallel pluricanonical form on the metric regular part,
with the induced limiting complex structure.

For the models below, $\Ric(\omega_\alpha)=\pi^*\alpha$ and
$\alpha>0$, so no model cone is Ricci flat. This is the relevant
distinction: the decay estimate in Corollary~\ref{lim:curvature}
places our examples within the quadratic-decay setting, but the
Ricci-flat-cone hypothesis is absent.

\section{Hopf cones and controlled parameter families}
\label{geom:section}

We first construct a path through the transverse data with a
prescribed cluster set. We then compute the homogeneous cone
metrics, their cross sections, and their   isometry classes. The
strict positivity of the transverse Ricci form will be used both
in the exterior construction and in the isometry classification.

\subsection{A path with prescribed cluster set}

\begin{lemma}[A controlled path with a prescribed limit set]
\label{geom:path}
Let $\mathcal A\subset\mathcal P_\kappa$ be nonempty, compact,
and connected. There is a smooth path
$\alpha:[0,\infty)\to\mathcal P_\kappa$ such that
\begin{enumerate}
\item $\alpha(\tau)=\kappa\omega_0$ for $0\le\tau\le1$;
\item the set of limits of $\alpha(\tau_j)$ for
      $\tau_j\to\infty$ is exactly $\mathcal A$;
\item the range has compact closure in $\mathcal P_\kappa$, and
      $\alpha(\tau)\ge\delta\omega_0$ for some $\delta>0$;
\item for every pair of nonnegative integers $j,k$, the
      $C^k(Y)$ norms of $\partial_\tau^j\alpha$,
      $\partial_\tau^j\psi_\alpha$, and
      $\partial_\tau^j\phi_\alpha$ are uniformly bounded.
\end{enumerate}
In particular $\psi_\alpha=\phi_\alpha=0$ for $0\le\tau\le1$.
\end{lemma}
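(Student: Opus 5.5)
Work with the normalized potentials. By \eqref{geom:positive-class}, $\alpha\mapsto\psi_\alpha$ is an affine homeomorphism of $\mathcal P_\kappa$ onto the open convex set
\[
\Psi=\{\psi\in C^\infty(Y):\ \textstyle\int_Y\psi\,\omega_0^m=0,\ \kappa\omega_0+\ddc_Y\psi>0\}
\]
in a Fréchet space. So it suffices to build a smooth path $\psi(\tau)\in\Psi$ with $\psi=0$ on $[0,1]$, cluster set $\mathcal K=\{\psi_\alpha:\alpha\in\mathcal A\}$ (compact and connected), range with compact closure in $\Psi$, and all $\partial_\tau^j\psi$ bounded in every $C^k$. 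Compactness of $\mathcal K$ gives $\delta>0$ with $\kappa\omega_0+\ddc_Y\psi\ge2\delta\omega_0$ for $\psi\in\mathcal K$. Let $\mathcal K_\epsilon$ be the closed $\epsilon$-neighbourhood of $\mathcal K$ in a fixed metric $d(\psi,\psi')=\sum_k2^{-k}\min(1,\|\psi-\psi'\|_{C^k})$ for the Fréchet topology.

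\textbf{Construction.} Because $\mathcal K$ is compact and connected, for each $i$ it is $2^{-i}$-chain connected: finitely many points $p^i_1,\dots,p^i_{N_i}\in\mathcal K$ form a $2^{-i}$-net, and any two of them are joined by a chain in $\mathcal K$ with consecutive distances $<2^{-i}$. List a closed tour $q^i_0,q^i_1,\dots,q^i_{M_i}$ of such chain points that visits every $p^i_l$, and concatenate the tours for $i=1,2,\dots$, joining $q^i_{M_i}$ to $q^{i+1}_0$ by a further chain inside $\mathcal K$. The result is a sequence $(x_s)_{s\ge1}$ in $\mathcal K$ with $d(x_s,x_{s+1})\to0$ whose set of subsequential limits is exactly $\mathcal K$: it is contained in $\mathcal K$ by closedness, and every point of $\mathcal K$ lies within $2^{-i}$ of the $i$-th tour.

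Put $x_0=0$ (corresponding to $\kappa\omega_0$), fix a smooth nondecreasing step $\chi:\R\to[0,1]$ with $\chi=0$ on $(-\infty,0]$ and $\chi=1$ on $[1,\infty)$, and define the path by linear interpolation with slowly growing time steps:
\[
\psi(\tau)=x_s+\chi\bigl((\tau-T_s)/\Lambda\bigr)(x_{s+1}-x_s),\qquad \tau\in[T_s,T_{s+1}],
\]
with $T_0=1$ and $T_{s+1}-T_s=\Lambda\ge1$ fixed. Since $\chi$ is flat at $0$ and $1$, the path is smooth at the joins. Each segment lies in the convex set $\Psi$.

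\textbf{Verification.} Item (i) holds because $\psi=0$ for $\tau\le1$, and then $\phi_\alpha=0$ by the uniqueness in Proposition~\ref{geom:transverse}, since $\beta=a\omega_0$ solves the equation with $\psi=0$.

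Uniform bounds on $\partial_\tau^j\psi$ in each $C^k$ follow because the points $x_s$ lie in the compact set $\{0\}\cup\mathcal K$, so $\|x_{s+1}-x_s\|_{C^k}$ is uniformly bounded, while the derivatives of $\chi$ are fixed. The same bounds hold for $\partial_\tau^j\alpha=\ddc_Y\partial_\tau^j\psi$.

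For the range, every segment lies in the convex hull of $\{0\}\cup\mathcal K$, which is compact in the Fréchet space. On that compact set $\kappa\omega_0+\ddc\psi\ge\min(\kappa,2\delta)\,\omega_0$, because positivity is preserved under convex combinations. This gives (iii).

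For the cluster set, let $\tau_j\to\infty$. The point $\psi(\tau_j)$ lies on the segment $[x_s,x_{s+1}]$ with $d(x_s,x_{s+1})\to0$; translation invariance of $d$ gives $d(\psi(\tau_j),x_s)\le d(x_{s+1},x_s)$. Hence limits of $\psi(\tau_j)$ coincide with subsequential limits of $(x_s)$, which is exactly $\mathcal K$. Conversely, every point of $\mathcal K$ is such a limit, taking $\tau=T_s$. This gives (ii).

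\textbf{Main obstacle: bounds on $\phi_\alpha$.} Item (iv) for $\phi_\alpha$ requires uniform control of the solution map of Proposition~\ref{geom:transverse} over the compact closure $\overline{\mathcal R}$ of the range. By that proposition the map is smooth in Hölder charts near each point, and $L_\alpha$ is invertible. Covering $\overline{\mathcal R}$ by finitely many such charts gives uniform bounds on the derivatives of $\alpha\mapsto\phi_\alpha$ from $C^{k+2,\gamma}$ into $C^{k,\gamma}$ along $\overline{\mathcal R}$. The chain rule, combined with the bounds on $\partial_\tau^j\psi$, then bounds $\partial_\tau^j\phi_\alpha$ in every $C^k$. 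I expect the one delicate point to be obtaining uniform-in-$k$ statements through this finite-cover argument, since the neighbourhoods may shrink with the Hölder order. This is handled by fixing $k$ first and then taking the finite cover for that $k$; the cover need not be uniform in $k$, since only a bound for each fixed $k$ is required.
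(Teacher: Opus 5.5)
Your proposal is correct and follows essentially the paper's route: a sequence in $\mathcal A$ built from $2^{-q}$-chains through finer and finer nets, smooth convex interpolation between consecutive nodes after a prepended segment from $\kappa\omega_0$, and uniform control of $\phi_\alpha$ on the compact range closure via Proposition~\ref{geom:transverse}. The paper phrases this last step as differentiating $(\Delta_\beta+n)\dot\phi=-\dot\psi$ with uniformly bounded inverses, which is equivalent to your finite-cover and chain-rule argument.

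One small correction concerns the range. In infinite dimensions the convex hull of $\{0\}\cup\mathcal K$ need not be closed, so it should not be called compact. Instead, note that your range lies in the set of two-point combinations $(1-t)p+tq$. This set is the continuous image of $(\{0\}\cup\mathcal K)^2\times[0,1]$, hence compact, and your positivity bound holds on it.
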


\begin{proof}
Fix smooth background norms and use the metric
\[
 d_\infty(\gamma,\gamma')
 =\sum_{k=0}^\infty2^{-k-1}
       \min\{1,\|\gamma-\gamma'\|_{C^k}\}
\]
for the $C^\infty$ topology. For every $\varepsilon>0$, any two
points of $\mathcal A$ can be connected by a finite chain in
$\mathcal A$ whose successive distances are less than
$\varepsilon$. To see this, the set of points reachable from a
fixed point by such chains is nonempty and both relatively open
and relatively closed in $\mathcal A$; connectedness proves the
claim. This argument does not require path connectedness.

At stage $q$, visit all points of a finite $2^{-q}$-net of
$\mathcal A$, beginning at the last point of the preceding stage,
using finite $2^{-q}$-chains. Ensure at least one step at each
stage by repeating a point if necessary. The resulting sequence
$\gamma_j\in\mathcal A$ has limit set exactly $\mathcal A$, and
$d_\infty(\gamma_j,\gamma_{j+1})\to0$. In particular,
$\|\gamma_j-\gamma_{j+1}\|_{C^k}\to0$ for each fixed $k$.

Choose a fixed smooth $\chi:[0,1]\to[0,1]$, equal to $0$ near
$0$ and to $1$ near $1$. Join successive nodes on unit intervals by
\[
 \widetilde\alpha(j+t)
 =(1-\chi(t))\gamma_j+\chi(t)\gamma_{j+1}.
\]
The segments glue smoothly. Convexity preserves the class,
closedness, and a uniform positive lower bound. For $\ell\ge1$,
\[
 \partial_\tau^\ell\widetilde\alpha(j+t)
 =\chi^{(\ell)}(t)(\gamma_{j+1}-\gamma_j),
\]
which bounds every mixed derivative. Also
\[
 \sup_{0\le t\le1}
 \|\widetilde\alpha(j+t)-\gamma_j\|_{C^k}\longrightarrow0.
\]
Hence interpolation adds no limit points at infinity. Any sequence
of values either has bounded time parameters and a convergent
subsequence by continuity, or has times tending to infinity and a
convergent subsequence with limit in $\mathcal A$. This proves
compactness of the range closure. Prepending a constant interval
and one smooth convex segment from $\kappa\omega_0$ to
$\gamma_0$ proves the first three assertions without changing the
limit set.

The compact-image assertion of Proposition~\ref{geom:transverse}
gives uniform smooth bounds for $\phi,\psi$, and uniform positivity
of $\beta$. The normalized Poisson equation for $\psi$ gives all
its time derivative bounds. Differentiating
\eqref{geom:normalized-ma} gives
\[
 (\Delta_\beta+n)\dot\phi=-\dot\psi.
\]
The inverses of \eqref{geom:linearization} are uniformly bounded
on the compact parameter set, by local elliptic estimates and
invertibility. Repeated time differentiation and induction give
all the stated bounds for $\phi$. Finally $\phi=\psi=0$ solves
the normalization when $\alpha=\kappa\omega_0$, so uniqueness
proves the last assertion.
\end{proof}

\subsection{Homogeneous Hopf cones}

\begin{proposition}[Homogeneous cone formulas]
\label{geom:cone}
Let $\phi\in C^\infty(Y,\R)$ and
\[
 U(z)=|z|^{2a}e^{\phi([z])},
 \qquad \beta=a\omega_0+\ddc_Y\phi>0.
\]
Then $\omega^C=\ddc U>0$, and
\begin{align}
 \det(U_{j\bar k})
 &=a^2|z|^{2n(a-1)}e^{n\phi}\frac{\beta^m}{\omega_0^m},
 \label{geom:cone-determinant}\\
 \Ric(\omega^C)&=\pi^*(\Ric_Y\beta-n\beta).
 \label{geom:cone-ricci}
\end{align}
The metric completion is a cone with vertex $o$, radial distance
$\rho=\sqrt U$, and dilations $\delta_t(z)=t^{1/a}z$. If $v_{2n}$
is the Euclidean unit ball volume, then
\begin{equation}\label{geom:cone-volume}
 \Vol_{g_{\omega^C}}B(o,R)=a^n v_{2n}R^{2n}.
\end{equation}
\end{proposition}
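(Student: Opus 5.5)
The approach is to write $U=e^{h}$ with $h=a\log|z|^2+\phi([z])$ and compute everything through $h$. Then
\[
 \ddc U=e^h\bigl(\ddc h+i\partial h\wedge\bar\partial h\bigr),
 \qquad \ddc h=\pi^*(a\omega_0+\ddc_Y\phi)=\pi^*\beta .
\]
The form $\pi^*\beta$ is semipositive, and its kernel is exactly the complex radial line spanned by the Euler field $E=z^j\partial_j$. On that line $\partial h(E)=a+E\phi=a$, because $\phi$ is constant along $\C^*$-orbits. So $i\partial h\wedge\bar\partial h$ is positive on $\ker\pi^*\beta$, and the sum is positive definite. This gives $\omega^C>0$.

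For the determinant, I would work in the frame $E,\ \partial_{w^\mu}$, where the $w^\mu$ are horizontal lifts of affine coordinates on $Y$. In this frame $\omega^C/e^h$ has block form. The $E\bar E$ entry is $a^2$ after the horizontal correction. The transverse block is $\beta$ plus a rank-one term, and a Schur complement removes that term.

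Concretely I would compute $\det(\ddc h+i\partial h\wedge\bar\partial h)$ as a $dd^c$-type wedge. Since $(\pi^*\beta)^n=0$,
\[
 (\ddc h+i\partial h\wedge\bar\partial h)^n=n(\pi^*\beta)^{m}\wedge i\partial h\wedge\bar\partial h .
\]
Only the $E$-component of $\partial h$ survives against $(\pi^*\beta)^m$. Comparing with the Euclidean case $\phi=0$, $a=1$, where $\omega_E=\ddc|z|^2$, fixes the constants. The result is the ratio $a^2\,\beta^m/\omega_0^m$ times $|z|^{2n(a-1)}$ times $e^{n\phi}$. The last two factors come from $e^{nh}=|z|^{2na}e^{n\phi}$, and the factor $|z|^{-2n}$ is the Euclidean normalization $\omega_E^n\sim|z|^{2n}(\pi^*\omega_0)^m\wedge i\partial\log|z|^2\wedge\bar\partial\log|z|^2$. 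This yields \eqref{geom:cone-determinant}.

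The Ricci formula then follows from $\Ric=-\ddc\log\det$. The term $-\ddc\bigl(n(a-1)\log|z|^2\bigr)$ equals $n(1-a)\pi^*\omega_0$, and $-\ddc(n\phi)=-n\pi^*\ddc_Y\phi$. Combined with $-\pi^*\ddc_Y\log(\beta^m/\omega_0^m)=\pi^*(\Ric_Y\beta-n\omega_0)$, this gives
\[
 \pi^*\bigl(\Ric_Y\beta-na\omega_0-n\ddc_Y\phi\bigr)=\pi^*(\Ric_Y\beta-n\beta),
\]
which is \eqref{geom:cone-ricci}.

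For the cone structure, I would check that $D_\lambda^*U=\lambda^{2a}U$, so $\delta_t=D_{t^{1/a}}$ scales $g$ by $t^2$. Next I would show $|\nabla\rho|_g=1$ for $\rho=\sqrt U$. With $g=\tfrac12\omega(\cdot,J\cdot)$, the gradient of $U$ is a multiple of $\mathrm{Re}\,E$, and $|dU|^2_g=4U$ reduces to $E U=aU$ together with the $E\bar E$ entry $a^2U$ after inversion. Since $\rho$ is homogeneous of degree one under $\delta_t$, its gradient flow lines are the rays $t\mapsto\delta_t(z)$, and $g=d\rho^2+\rho^2 g_S$ on $\{\rho=1\}\times(0,\infty)$. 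The metric completion adds a single point $o$ at $\rho=0$, because the link is compact, so this is a metric cone.

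The volume formula follows by integrating $\det(U_{j\bar k})$ over $\{U<R^2\}$, substituting $z=R^{1/a}\zeta$, and reducing to $R^{2n}\Vol B(o,1)$. The constant $a^nv_{2n}$ is obtained by integrating in polar form. In that integration the radial factor gives $a^{-1}$ relative to the Euclidean case, and the angular integral gives $\int_Y\beta^m=a^m\int_Y\omega_0^m$ by the cohomology class. The $e^{n\phi}$ factor cancels against the $e^{-n\phi/a}$-type radius of the region in each fibre direction.

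I expect the main obstacle to be this last bookkeeping: the fibrewise radius depends on $\phi$, and the exponents have to cancel exactly. I would first verify the computation for $\phi=0$, and then handle general $\phi$ by a fibrewise change of radius.
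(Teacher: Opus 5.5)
Your proposal is correct. The second half, the cone structure and the volume, follows the paper. The paper also differentiates $EU=aU$ to get $\sum_j z_jU_{j\bar k}=aU_{\bar k}$, hence $\nabla U=\tfrac2a\,r\partial_r$ and $|\nabla U|^2=4U$. It then integrates in polar coordinates with the fibrewise cutoff $r<R^{1/a}e^{-\phi/(2a)}$, using that the Hopf map pushes normalized round measure to normalized Fubini--Study measure.

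The difference is in positivity and the determinant. The paper works in the chart $z=(\lambda w,\lambda)$ with $H=a\log(1+|w|^2)+\phi$ and writes out the Hessian entries. The Schur complement of the $\lambda\bar\lambda$ entry is $U(H_{\mu\bar\nu})$, and this one computation gives both positivity and the determinant. The cost is the Jacobian $|\lambda|^m$ and the identity $\det\omega_0=(1+|w|^2)^{-n}$.

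You instead argue invariantly. Positivity holds because $\ker\pi^*\beta=\C E$ and $\partial h(E)=a\neq0$. The determinant comes from $(\pi^*\beta+i\partial h\wedge\bar\partial h)^n=n(\pi^*\beta)^m\wedge i\partial h\wedge\bar\partial h$, normalized against $\omega_E=|z|^2(\pi^*\omega_0+i\partial\log|z|^2\wedge\bar\partial\log|z|^2)$. This avoids charts and Jacobians. It is the same mechanism the paper itself uses later in Section~\ref{ext:section}, through $\omega_E=e^s(\pi^*\omega_0+i\theta\wedge\bar\theta)$ and \eqref{ext:det}. The paper's chart version is more explicit but tied to coordinates.

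Two small remarks. First, the flow lines of $\nabla\rho$ are rays because $\nabla U$ is a multiple of $r\partial_r$, which you do assert. Homogeneity of $\rho$ alone would not give this. Second, the bookkeeping you expect to be the main obstacle is immediate. Since $\int_0^{R^{1/a}e^{-\phi/(2a)}}r^{2na-1}\,dr=R^{2n}e^{-n\phi}/(2na)$, the factor $e^{n\phi}$ cancels exactly. Combined with the factor $a^2$ in the determinant and the angular average $a^m$, this gives $a^n$.
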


\begin{proof}
On $z_n\ne0$, write $z=(\lambda w,\lambda)$ and
$H(w)=a\log(1+|w|^2)+\phi(w)$. Then
$U=|\lambda|^{2a}e^H$ and $\beta=iH_{\mu\bar\nu}
dw^\mu\wedge d\bar w^\nu$. Direct differentiation gives
\[
 U_{\lambda\bar\lambda}=\frac{a^2U}{|\lambda|^2},\quad
 U_{\lambda\bar\nu}=\frac{aU}{\lambda}H_{\bar\nu},\quad
 U_{\mu\bar\nu}=U(H_{\mu\bar\nu}+H_\mu H_{\bar\nu}).
\]
The Schur complement of the first entry is $U(H_{\mu\bar\nu})$.
This proves positivity and gives determinant
$a^2U^n|\lambda|^{-2}\det(H_{\mu\bar\nu})$ in these coordinates.
The change to $z$ has Jacobian of absolute value $|\lambda|^m$.
Using $|z|^2=|\lambda|^2(1+|w|^2)$ and
$\det((\omega_0)_{\mu\bar\nu})=(1+|w|^2)^{-n}$ proves
\eqref{geom:cone-determinant}. Taking $-\ddc\log$ of that formula
proves \eqref{geom:cone-ricci}.

Let $V=r\partial_r$ and $Z=\sum z_j\partial_{z_j}$.
Homogeneity and circle invariance imply $ZU=aU$, hence
$\sum_jU_{j\bar k}z_j=aU_{\bar k}$. With our real metric
normalization this gives
\[
 g(V,\cdot)=\frac a2dU,\qquad
 \nabla U=\frac2aV,\qquad |\nabla U|^2=4U.
\]
Thus $|\nabla\rho|=1$ and $\rho\nabla\rho=V/a$.
Moreover $\delta_t^*g=t^2g$ and $\rho\circ\delta_t=t\rho$.
The orthogonal radial flow identifies the metric with
$d\rho^2+\rho^2h$ on $(0,\infty)\times\{\rho=1\}$.
The link is compact; the completion adds exactly one point at
$\rho=0$, and distance to it is $\rho$.

For volume put $J_\beta=\beta^m/\omega_0^m$. Since
$dV_g=\det(U_{j\bar k})dV_E$, polar integration yields
\begin{align*}
 \Vol\{\rho<R\}
 &=\int_{S^{2n-1}}a^2e^{n\phi}J_\beta
   \int_0^{R^{1/a}e^{-\phi/(2a)}}r^{2na-1}\,dr\,dS\\
 &=\frac{aR^{2n}}{2n}\int_{S^{2n-1}}J_\beta\circ\pi\,dS.
\end{align*}
Normalized round measure pushes forward to normalized
Fubini--Study volume, so the average of $J_\beta\circ\pi$ is
$\int_Y\beta^m/\int_Y\omega_0^m=a^m$.
Since $|S^{2n-1}|=2nv_{2n}$, this proves
\eqref{geom:cone-volume}.
\end{proof}

\begin{remark}[The Hopf link]\label{geom:link-metric}
The link in Proposition~\ref{geom:cone} can be described explicitly.
Let
\[
 \eta_0=\left.d^c\log|z|^2\right|_{S^{2n-1}},\qquad
 \eta_\phi=a\eta_0+\pi^*d_Y^c\phi,\qquad
 g_\beta=\tfrac12\beta(\,\cdot\,,J_Y\,\cdot\,).
\]
The map
\[
 \Phi_\phi:S^{2n-1}\longrightarrow\{U=1\},\qquad
 \Phi_\phi(u)=e^{-\phi([u])/(2a)}u
\]
identifies the link metric with
\begin{equation}\label{geom:explicit-link}
 h_\phi=\eta_\phi\otimes\eta_\phi+\pi^*g_\beta,
 \qquad d\eta_\phi=\pi^*\beta.
\end{equation}
Indeed, on the punctured cone put $\eta=d^c\log U$ and
$\rho=\sqrt U$.  Then
\[
 d^cU=\rho^2\eta,\qquad
 dd^cU=2\rho\,d\rho\wedge\eta+\rho^2\pi^*\beta,
 \qquad \Phi_\phi^*\eta=\eta_\phi.
\]
Together with the metric convention in
\eqref{main:conventions}, this gives
\eqref{geom:explicit-link} and
$g_{\omega^C}=d\rho^2+\rho^2h_\phi$.
If $T$ generates $u\mapsto e^{it}u$ with period $2\pi$, then
$\eta_\phi(T)=a$ and $h_\phi(T,T)=a^2$.  Thus every Hopf
fiber has length $2\pi a$; the metric transverse to the fibers is
$g_\beta$.

For example, $\alpha=\kappa\omega_0$ gives
$\phi_\alpha=0$ and $\beta_\alpha=a\omega_0$, so
\[
 h_\alpha=a^2\eta_0\otimes\eta_0+a\pi^*g_{\omega_0}.
\]
For $n=2$ this is a Berger metric on $S^3$.  Its Hopf quotient
is a round $S^2$ of radius $\sqrt a/2$ in our normalization.
\end{remark}

For $\alpha\in\mathcal P_\kappa$, denote this pointed cone by
$C_\alpha$ when $\phi=\phi_\alpha$. Its normalized Ricci potential
and Ricci current are
\begin{equation}\label{geom:cone-current}
 f_\alpha=\kappa\log|z|^2+\psi_\alpha([z])-(n+1)\log a,
 \qquad T_\alpha=\ddc f_\alpha.
\end{equation}
On the punctured space $T_\alpha=\pi^*\alpha$. This formula also
specifies the current at the vertex: $f_\alpha$ is locally integrable
and plurisubharmonic after setting its value at $0$ to $-\infty$.
The coefficients of $\pi^*\alpha$ are $O(r^{-2})$, hence integrable
for $n\ge2$. Integration by parts on $r>\varepsilon$ has boundary
terms $O(\varepsilon^{2n-2}(1+|\log\varepsilon|))$, tending to
zero. Thus $T_\alpha$ is precisely that locally integrable form.

\subsection{Isometry classes and nonunique cones}

\begin{theorem}[Real pointed isometry classification]
\label{geom:classification}
For $\alpha_1,\alpha_2\in\mathcal P_\kappa$, the cones
$C_{\alpha_1}$ and $C_{\alpha_2}$ are   isometric if and
only if either
\begin{equation}\label{geom:equivalence}
\begin{aligned}
 &\alpha_1=P^*\alpha_2
 \quad(P\text{ projective holomorphic}),\\
 &\text{or}\quad \alpha_1=-Q^*\alpha_2
 \quad(Q\text{ projective antiholomorphic}).
\end{aligned}
\end{equation}
Every such isometry on the punctured cones is $z\mapsto Az$ or
$z\mapsto A\bar z$ for some $A\in\operatorname{GL}(n,\C)$.
Cones with different exponents $a$ are not pointed isometric.
\end{theorem}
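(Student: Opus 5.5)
The plan is to handle the easy direction first, then show that every pointed isometry is linear or antilinear, and finally read off the relation between $\alpha_1$ and $\alpha_2$.

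\emph{Sufficiency.} Let $P$ be projective holomorphic, so $P=[A]$ with $A\in\operatorname{GL}(n,\C)$, and suppose $\alpha_1=P^*\alpha_2$. Pulling back the twisted equation \eqref{geom:twisted-equation} shows that $P^*\beta_{\alpha_2}$ solves the $\alpha_1$-equation. By uniqueness in Proposition~\ref{geom:transverse}, $P^*\beta_{\alpha_2}=\beta_{\alpha_1}$, and hence $P^*\phi_{\alpha_2}=\phi_{\alpha_1}+c$ after accounting for the change $a\log|Az|^2-a\log|z|^2$ of the reference potential. The normalization in \eqref{geom:normalized-ma} then forces
\[
 V_{\alpha_2}(Az)=e^{c'}V_{\alpha_1}(z)
\]
for a constant $c'$. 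Composing with a real dilation removes the constant, so $z\mapsto Az$ pulls back $\omega_{\alpha_2}$ to $\omega_{\alpha_1}$. The antiholomorphic case is identical. Conjugation reverses the sign of $(1,1)$-forms, which is why the relation reads $-Q^*$, and $z\mapsto A\bar z$ is an isometry of the Riemannian metrics because $g$ is defined from $\omega$ and $J$, and conjugation anticommutes with $J$.

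\emph{Necessity.} Let $F:C_{\alpha_1}\to C_{\alpha_2}$ be a pointed isometry. It preserves the radial functions, $\rho_2\circ F=\rho_1$, so it restricts to a smooth isometry of the punctured cones. The main step is to show that $F$ is holomorphic or antiholomorphic. I would deduce this from the Ricci tensor. By \eqref{geom:cone-ricci}, $\Ric=\pi^*\alpha$ has kernel spanned by $V$ and $JV$ and is positive transversally. Equivalently, the Ricci endomorphism has a kernel of real rank two and image the horizontal distribution $\mathcal H$, where it is $J$-linear. An isometry carries Ricci to Ricci, so $F$ maps the Reeb field $JV$ to $\pm JV$: it preserves $V$ by the cone structure, and the length of $JV$ is fixed. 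The Riemannian data on $\mathcal H$ are $g$ and the Ricci endomorphism $R$, and on $\mathcal H$ the complex structure is $J=\Ric(\cdot,J\cdot)$ normalized against $g$ (polar decomposition of $R$ relative to $g$ composed with the Ricci form). Since $\Ric$ is a symmetric tensor but the Ricci \emph{form} requires $J$, this step needs care. I would instead use the Levi-Civita holonomy together with Kähler rigidity. I expect the following route to work best. The cone is Kähler and not locally a product, since the link metric $h_\phi$ with $\Ric>0$ is irreducible. By de Rham/Berger, on a simply connected irreducible non-Ricci-flat region the only parallel complex structures are $\pm J$. The pullback $F^*J_2$ is a parallel complex structure compatible with $g_1$, so $F^*J_2=\pm J_1$ on the connected punctured cone. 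This is the step I expect to be the main obstacle: ruling out a hyperkähler-type or reducible holonomy. Strict positivity of $\Ric$ on $\mathcal H$ excludes holonomy in $\mathrm{Sp}$, while irreducibility follows from the cone structure via Gallot's theorem, because the link is complete with $\Ric$ positive and the cone is not flat.

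\emph{Linearity and conclusion.} Assume $F$ is holomorphic (otherwise compose with conjugation). Then $F$ commutes with the dilation flows generated by $V$, since it preserves $\rho$ and $\nabla\rho$. Because $\delta_t(z)=t^{1/a}z$ for both cones, $F(\lambda z)=\lambda F(z)$ for $\lambda>0$, and commuting with $JV$ gives the same for all $\lambda\in\C^*$. By Hartogs, $F$ extends holomorphically across $0$, and $\C^*$-homogeneity of degree one makes it linear. Thus $F(z)=Az$, and $A^*\omega_{\alpha_2}=\omega_{\alpha_1}$. Applying \eqref{geom:cone-ricci} then gives $P^*\alpha_2=\alpha_1$ with $P=[A]$. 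Finally, the exponent $a$ is a pointed isometry invariant, since $a^n$ is the volume ratio by \eqref{geom:cone-volume}; alternatively, the Hopf fibre length $2\pi a$ identified in Remark~\ref{geom:link-metric} recovers it.
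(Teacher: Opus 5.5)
Your proof is correct. The sufficiency direction, the linearity step (dilation equivariance plus extension across the vertex) and the detection of $a$ by the volume ratio are essentially the paper's. The genuine difference is how you show that $F$ is holomorphic or antiholomorphic.

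You actually reached the paper's key intermediate fact and then set it aside. Namely, $F$ preserves the radial field $\mathcal R=\rho\nabla\rho$, and since $\ker\Ric$ is the radial complex line when $\alpha>0$, you get $F_*(J_1\mathcal R_1)=\epsilon J_2\mathcal R_2$ with $\epsilon=\pm1$ constant. The paper finishes in one line from there. On a metric cone $\nabla\mathcal R=\mathrm{Id}$, and $J$ is parallel, so $\nabla_X(J\mathcal R)=JX$. An isometry intertwines the Levi-Civita connections, so differentiating $F_*(J_1\mathcal R_1)=\epsilon J_2\mathcal R_2$ gives $F_*J_1X=\epsilon J_2F_*X$ directly.

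Your route instead runs as follows:
\begin{itemize}
\item Gallot's theorem applies because the link is compact and the cone is not flat, so the holonomy is irreducible.
\item $\C^n\setminus\{0\}$ is simply connected for $n\ge2$, so restricted and full holonomy agree.
\item By Schur's lemma, the commutant of the holonomy is a real division algebra containing $J$, hence $\C$ or $\mathbb H$.
\item The quaternionic case would force Ricci-flatness, so the parallel orthogonal complex structure $F^*J_2$ must be $\pm J_1$.
\end{itemize}
This is valid and in fact needs only that the cone is not Ricci-flat, rather than strict positivity of $\alpha$. The cost is importing holonomy theory for a step that is elementary in this setting; the paper's computation is self-contained but uses $\alpha>0$ to pin down the Ricci kernel.

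Two smaller points.
\begin{itemize}
\item Your first justification of irreducibility, that the link with $\Ric>0$ is irreducible, does not work on its own. Positive Ricci curvature does not preclude a local product (think of $S^2\times S^2$). Irreducibility of the link also does not pass to the cone: the round sphere is irreducible, yet its cone is flat. Gallot's theorem is what actually carries this step.
\item In the sufficiency direction, the normalization of $\phi_\alpha$ does not force the constant in $\log\bigl(V_{\alpha_2}(Az)/V_{\alpha_1}(z)\bigr)$. That function is constant because it descends to a pluriharmonic function on $Y$. The constant itself is arbitrary and is removed by rescaling $A$, as you then do.
\end{itemize}
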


\begin{proof}
The volume density \eqref{geom:cone-volume} determines $a$.
On either punctured cone put $\mathcal R=\rho\nabla\rho$.
Since $\alpha>0$, the kernel of the real Ricci tensor is exactly
the radial complex line
\[
 \ker\operatorname{Ric}_g
 =\operatorname{span}_{\R}\{\mathcal R,J\mathcal R\}.
\]
A pointed isometry $F$ restricts to a smooth Riemannian isometry,
preserves distance to the vertex, and therefore preserves
$\mathcal R$. Preservation of Ricci and orthogonality imply
$F_*(J_1\mathcal R_1)=\epsilon J_2\mathcal R_2$, with
$\epsilon=\pm1$ constant by connectedness. On a metric cone,
$\nabla\mathcal R=\operatorname{Id}$. Since $J$ is parallel,
$\nabla_X(J\mathcal R)=JX$. Differentiating the identity under
$F$ gives $F_*J_1=\epsilon J_2F_*$. Thus $F$ is holomorphic or
antiholomorphic.

Preserving $\mathcal R$ implies that $F$ commutes with cone
dilations, so $F(tz)=tF(z)$ for $t>0$. A holomorphic $F$ extends
across $0$ (it is bounded there because $\rho(F(z))=\rho(z)$);
its Taylor expansion gives $F(z)=Az$, and its inverse shows that
$A$ is invertible. In the antiholomorphic case composition with
conjugation gives $F(z)=A\bar z$. Naturality of Ricci proves the
necessity of \eqref{geom:equivalence}. The minus sign in the second
relation is the sign for ordinary pullback of real $(1,1)$-forms
under an antiholomorphic isometry.

Conversely, if $\alpha_1=P(A)^*\alpha_2$, uniqueness in
Proposition~\ref{geom:transverse} gives
$\beta_{\alpha_1}=P(A)^*\beta_{\alpha_2}$. Since
$\ddc\log U_\alpha=\pi^*\beta_\alpha$, the function
$\log(U_{\alpha_2}(Az)/U_{\alpha_1}(z))$ descends to a smooth
pluriharmonic function on $Y$, hence is constant. Rescaling $A$ by
a positive scalar makes that constant zero, producing the desired
isometry. If $\alpha_1=-Q^*\alpha_2$, the form
$-Q^*\beta_{\alpha_2}$ solves the same transverse equation for
$\alpha_1$. The identical ratio argument, with
$\ddc(f\circ Q)=-Q^*\ddc f$, gives an antiholomorphic isometry.
\end{proof}

\begin{corollary}[A family giving nonunique cones]
\label{geom:nonunique}
For every $n\ge2$ and $0<a<1$ there is a smooth family
$\alpha_t\in\mathcal P_\kappa$, $0\le t\le t_0$, with
compact connected image such that $C_{\alpha_t}$ is not pointed
isometric to $C_{\alpha_0}$ whenever $0<t\le t_0$.
\end{corollary}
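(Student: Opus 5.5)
We take a straight segment in the convex set $\mathcal P_\kappa$ starting at the standard form, and use Theorem~\ref{geom:classification} to show that for $t>0$ the form is not equivalent to $\alpha_0=\kappa\omega_0$.

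\textbf{Construction.} Fix a nonconstant smooth real function $h$ on $Y$ with $\int_Y h\,\omega_0^m=0$. For $t_0>0$ small, define
\[
 \alpha_t=\kappa\omega_0+t\,\ddc_Y h,\qquad 0\le t\le t_0.
\]
Since $\ddc_Y h$ is bounded and $\kappa>0$, positivity holds once $t_0\|\ddc_Y h\|_{\omega_0}<\kappa/2$. Each $\alpha_t$ is closed and lies in the class $\kappa[\omega_0]$. Hence $t\mapsto\alpha_t$ is a smooth (affine) path into $\mathcal P_\kappa$, and its image is compact and connected.

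\textbf{Non-equivalence.} By Theorem~\ref{geom:classification}, $C_{\alpha_t}$ is pointed isometric to $C_{\alpha_0}$ only if either
\[
 \alpha_t=P^*(\kappa\omega_0)\quad\text{or}\quad \alpha_t=-Q^*(\kappa\omega_0).
\]
Every projective holomorphic $P$, and every projective antiholomorphic $Q$, is a smooth self-map of $Y$. So in either case the metric $g_{\alpha_t}/\kappa$ would be isometric to the Fubini--Study metric $g_{\omega_0}$. The sign works out because $-Q^*\omega_0$ is the Kähler form of $Q^*g_{\omega_0}$ with respect to $J_Y$.

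We rule this out with a scalar invariant. The scalar curvature of $g_{\omega_0}$ is a constant, and this constant is preserved under pullback by isometries. It therefore suffices to choose $h$ so that the scalar curvature of $\kappa\omega_0+t\ddc_Y h$ is not constant for $0<t\le t_0$.

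\textbf{The main obstacle.} A generic $h$ need not work for every small $t$. In particular, $\kappa\omega_0+t\ddc_Y h$ could itself be a pullback of $\kappa\omega_0$ by an element of $\operatorname{PGL}(n,\C)$. Those orbits consist of Kähler metrics with constant scalar curvature, so the scalar-curvature test is exactly what excludes them.

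The first-order variation of scalar curvature at $\omega_0$ in the direction $\ddc h$ is
\[
 -c\,(\Delta^2+\lambda\Delta)h
\]
for some constant $c\neq0$, where $\lambda$ is the first nonzero eigenvalue for Fubini--Study, and $\ker(\Delta+\lambda)$ consists of the holomorphy potentials. We therefore take $h$ to be a Laplace eigenfunction whose eigenvalue is neither $0$ nor $\lambda$; for instance, a nonconstant function in the second nonzero eigenspace. Then the derivative at $t=0$ is a nonzero multiple of $h$, which is nonconstant.

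It follows that the scalar curvature $S_t$ is nonconstant for $0<t\le t_0$ once $t_0$ is small. Indeed, write
\[
 S_t-S_0=t\,c'h+O(t^2)
\]
with $c'\ne0$ and the remainder controlled in $C^0$. The oscillation of $S_t$ is then at least $t\,|c'|\operatorname{osc}h/2>0$.

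\textbf{Conclusion.} For $0<t\le t_0$, the metric $g_{\alpha_t}$ has nonconstant scalar curvature, so it cannot be an isometric pullback of $\kappa g_{\omega_0}$ under any projective holomorphic or antiholomorphic map. Theorem~\ref{geom:classification} then shows that $C_{\alpha_t}$ is not pointed isometric to $C_{\alpha_0}$, which proves the corollary.
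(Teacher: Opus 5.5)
Your argument is correct, but it follows a different route from the paper's. The paper perturbs the transverse metric rather than the Ricci datum. It sets $\beta_t=a\omega_0+t\ddc_Y h$ with $-\Delta_{a\omega_0}h=\lambda h$ and $\lambda>n/a$, and defines $\alpha_t=\Ric_Y\beta_t-n\beta_t$, so that $\beta_{\alpha_t}=\beta_t$ by the uniqueness in Proposition~\ref{geom:transverse}. It then distinguishes the cones by their own scalar curvature on the unit distance sphere, $\Scal=4\rho^{-2}(S(\beta_t)-nm)$. This is constant at $t=0$ and nonconstant for small $t>0$. Because any pointed isometry preserves that sphere and scalar curvature, the paper never uses Theorem~\ref{geom:classification}.

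You instead take the affine path $\alpha_t=\kappa\omega_0+t\ddc_Y h$. It is trivially a smooth path in $\mathcal P_\kappa$ and requires neither the transverse equation nor a cone scalar-curvature formula. The weight then falls on the necessity half of Theorem~\ref{geom:classification}: pointed isometries are linear or antilinear, so $g_{\alpha_t}$ would be a pullback of $\kappa g_{\omega_0}$. Your sign check for $-Q^*$ is right. You then test the scalar curvature of $g_{\alpha_t}$ on $Y$, rather than that of $g_{\beta_t}$.

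The analytic core is the same in both arguments: the linearization $-\Delta(\Delta+\lambda_0)$ of scalar curvature at a K\"ahler--Einstein metric, applied to an eigenfunction from the second nonzero eigenspace. The paper's route gives an invariant intrinsic to the cone and does not depend on the holomorphicity argument. Yours has a simpler family but depends on the full rigidity mechanism behind the classification: the Ricci-kernel identification and the extension across the vertex.
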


\begin{proof}
Set $\beta_0=a\omega_0$ and choose a real eigenfunction
$\Delta_{\beta_0}h=-\lambda h$ with $\lambda>n/a$.
For sufficiently small $t$ put
\[
 \beta_t=\beta_0+t\ddc_Yh,
 \qquad \alpha_t=\Ric_Y\beta_t-n\beta_t.
\]
Both forms remain strictly positive, and $[\alpha_t]=\kappa[\omega_0]$.
Writing $S(\beta)=\tr_\beta\Ric\beta$, differentiation gives
\[
 \left.\frac d{dt}\right|_{0}S(\beta_t)
 =-\frac na\Delta_{\beta_0}h-\Delta_{\beta_0}^2h
 =\lambda\left(\frac na-\lambda\right)h.
\]
This is nonconstant. Evaluating at two points with different
$h$-values shows that $S(\beta_t)$ is nonconstant for every
sufficiently small positive $t$. The real scalar curvature of the
cone is
\begin{equation}\label{geom:cone-scalar}
 \Scal_{g^C}(z)
 =\frac4{U(z)}\tr_\beta\alpha
 =\frac4{\rho(z)^2}\bigl(S(\beta)([z])-nm\bigr).
\end{equation}
Indeed the inverse Hessian has horizontal block
$U^{-1}\beta^{-1}$ in the adapted horizontal--radial coframe,
and our metric convention gives real scalar curvature
$4\tr_{(U_{j\bar k})}\Ric$. Thus ambient scalar curvature on
the unit distance sphere is constant at $t=0$ and nonconstant for
all sufficiently small positive $t$. A pointed isometry preserves
this sphere and this scalar curvature, proving the assertion.
\end{proof}

\begin{remark}\label{geom:scope}
The classification above concerns cones with the standard Hopf
action and smooth strictly positive transverse Ricci form. Strict
positivity is used to identify the Ricci kernel with the radial
complex line. Neither the real-isometry classification nor the
realization theorem is asserted here for arbitrary semipositive
or singular transverse currents.
\end{remark}

\section{A slowly varying exterior metric}
\label{ext:section}

Fix the path supplied by Lemma~\ref{geom:path}. All of its bounds,
and those of $\psi(\tau)$ and $\phi(\tau)$, are fixed before
choosing $b$ or $L$. In particular, for constants independent of
$\tau$,
\[
 \alpha(\tau)\ge\delta\omega_0,\qquad
 \mu\omega_0\le\beta(\tau)\le M\omega_0,\qquad
 \sup_{\tau\ge0}\|\partial_\tau^j\phi(\tau)\|_{C^k}
 +\sup_{\tau\ge0}\|\partial_\tau^j\psi(\tau)\|_{C^k}<\infty
\]
for every fixed $j,k$. We arrange $\phi=\psi=0$ for $0\le\tau\le1$.
On $|z|\ge1$, put
\begin{equation}\label{ext:variables}
 s=\log|z|^2,\qquad \sigma=L+s,\qquad
 \tau=\log(\sigma/L),\qquad x=\sigma^{-1},\qquad y=[z],
\end{equation}
and define
\begin{equation}\label{ext:potential}
 V(s)=e^{as}\sigma^b,\qquad
 U_{\rm ext}(z)=V(s)e^{\phi(\tau,y)}.
\end{equation}

\begin{proposition}[Exterior estimates]\label{ext:theorem}
There is $b>0$ such that, for every sufficiently large $L$,
$\omega_{\rm ext}=\ddc U_{\rm ext}$ is K\"ahler on $|z|\ge1$.
Its Ricci form satisfies
\begin{equation}\label{ext:ricci-margin}
 \Ric(\omega_{\rm ext})\ge
 c_b\left(\pi^*\omega_0+
          \sigma^{-2}i\theta\wedge\bar\theta\right),
 \qquad \theta=\partial s,\qquad c_b>0.
\end{equation}
The constants can be chosen independently of all sufficiently
large $L$. Moreover,
\begin{align}
 c r^{2a-2}\sigma^b g_E
 &\le g_{\omega_{\rm ext}}
 \le C r^{2a-2}\sigma^b g_E,\label{ext:metric}\\
 c r^{-2\kappa}\sigma^{nb}
 &\le \det((U_{\rm ext})_{j\bar k})
 \le C r^{-2\kappa}\sigma^{nb}.\label{ext:density}
\end{align}
Its Ricci potential has the exact form
\begin{equation}\label{ext:f}
 f_{\rm ext}
 =-\log\det((U_{\rm ext})_{j\bar k})
 =\kappa s-nb\log\sigma+\Psi(\tau,y)+E(s,y),
 \qquad \Psi=\psi-(n+1)\log a.
\end{equation}
For every fixed $k$,
\begin{equation}\label{ext:error}
 \|E\|_{C^k(Y)}\le C_{b,k}\sigma^{-1},\qquad
 \|E_s\|_{C^k(Y)}\le C_{b,k}\sigma^{-2},\qquad
 \|E_{ss}\|_{C^k(Y)}\le C_{b,k}\sigma^{-3}.
\end{equation}
\end{proposition}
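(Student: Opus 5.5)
The plan is to redo the computation of Proposition~\ref{geom:cone} in the affine chart $z=(\lambda w,\lambda)$, now with a slowly varying potential, and to treat everything coming from the $\tau$- and $\sigma$-dependence as a perturbation of relative size $O(\sigma^{-1})$. Write $U_{\rm ext}=e^{G}$ with
\[
 G=as+b\log\sigma+\phi(\tau,y),\qquad s=\log|\lambda|^2+\log(1+|w|^2).
\]
Since $\partial_s\tau=\sigma^{-1}$, each $s$-derivative of $b\log\sigma$ or of $\phi(\tau,y)$ produces a factor $\sigma^{-1}$. The uniform bounds of Lemma~\ref{geom:path} then give, for instance, $G_s=a+b\sigma^{-1}+\sigma^{-1}\dot\phi$ and $G_{ss}=-b\sigma^{-2}+O(\sigma^{-2})$. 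We have $\ddc U=U(\ddc G+i\partial G\wedge\bar\partial G)$, and $\ddc G$ splits into three parts: the transverse part $\pi^*\beta(\tau)+O(\sigma^{-1})$, the radial part $G_{ss}\,i\theta\wedge\bar\theta$, and mixed terms $\sigma^{-1}\theta\wedge\bar\partial_Y\dot\phi$ and conjugates. The gradient term contributes $G_s^2\,i\theta\wedge\bar\theta\approx a^2 i\theta\wedge\bar\theta$. As in Proposition~\ref{geom:cone}, the Schur complement of the $\lambda\bar\lambda$ entry is $U(\beta(\tau)+O(\sigma^{-1}))$. This gives positivity once $L$ is large, because $\sigma\ge L$ and $\beta\ge\mu\omega_0$. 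It also gives the determinant
\[
 \det(U_{j\bar k})=U^n|z|^{-2n}\,
 \frac{\beta(\tau)^m}{\omega_0^m}\,(a^2+O(\sigma^{-1})).
\]
Substituting $U=e^{as}\sigma^be^\phi$ and using \eqref{geom:normalized-ma} to replace $\log(\beta^m/\omega_0^m)+n\phi$ by $-\psi+m\log a$, we get \eqref{ext:f}, with $E=-\log(1+O(\sigma^{-1}))$ an explicit smooth function of $(\sigma^{-1},\tau,y)$ whose expansion starts at order $\sigma^{-1}$. Each further $\partial_s$ adds a factor $\sigma^{-1}$, which yields \eqref{ext:error}. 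The bounds \eqref{ext:metric} and \eqref{ext:density} follow from the same block structure, since $|\theta|_E\sim r^{-1}$ and $\pi^*\omega_0\sim r^{-2}g_E$ on the horizontal directions.

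For the Ricci form I will use $\Ric=\ddc f_{\rm ext}$ with $f$ given by \eqref{ext:f}, and decompose the contributions.
\begin{enumerate}
\item $\kappa s+\psi(\tau,y)$ contributes $\pi^*\alpha(\tau)\ge\delta\pi^*\omega_0$, plus radial corrections $\sigma^{-2}\ddot\psi$-type terms in $\theta\bar\theta$ and mixed terms $\sigma^{-1}\theta\wedge\bar\partial_Y\dot\psi$.
\item $-nb\log\sigma$ contributes $nb\,\sigma^{-2}i\theta\wedge\bar\theta$. This is the radial margin.
\item $E$ contributes $O(\sigma^{-1})\pi^*\omega_0$, radial terms of order $\sigma^{-3}$, and mixed terms of order $\sigma^{-2}$.
\end{enumerate}
Up to its size, $\ddc$ of a function $F(s,\tau,y)$ is
\[
 F_{ss}\,i\theta\wedge\bar\theta+\text{mixed }\partial_sF\text{-derivatives}+\pi^*\ddc_YF,
\]
and the slow dependence multiplies each $s$-derivative by $\sigma^{-1}$. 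Collecting terms, the Ricci form is a Hermitian form of the shape
\[
 A\,\pi^*\omega_0+B\,\sigma^{-2}i\theta\wedge\bar\theta
 +\text{mixed terms with coefficient }O(\sigma^{-1}),
\]
where
\[
 A\ge\delta-C\sigma^{-1},\qquad B\ge nb-C_0-C_b\sigma^{-1}.
\]
Here $C_0$ bounds the $\ddot\psi,\dot\psi$ contributions from Lemma~\ref{geom:path} and does not depend on $b$. Relative to the frames $\pi^*\omega_0$ and $\sigma^{-2}\theta\bar\theta$, the mixed terms have coefficient $C_1$ times $\sigma^{-1}\cdot\sigma$ normalization, i.e.\ $O(1)$ with constant $C_1$ independent of $b$, plus $C_b\sigma^{-1}$ from $E$. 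By Cauchy--Schwarz they are absorbed once
\[
 (\delta/2)(nb-C_0)/2>C_1^2.
\]
So I first fix $b$ large depending only on the path, and then choose $L$ large depending on $b$. This gives \eqref{ext:ricci-margin} with $c_b$ independent of large $L$.

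I expect the main obstacle to be the bookkeeping in this last step. One has to verify that, in the $\sigma$-weighted frame, the mixed terms from $\psi$ really have size $O(1)$ with a constant independent of $b$, while the $b$-dependent mixed terms coming from $E$ carry an extra factor $\sigma^{-1}$. This requires tracking exactly how $b$ enters $E$, and why the leading $b\sigma^{-1}$ term of $G_s$ cancels or only produces $\sigma^{-1}$-small errors. I would first write $E$ as an explicit function of $(x=\sigma^{-1},\tau,y)$, smooth near $x=0$ with $E|_{x=0}=0$, and then read off all the bounds uniformly.
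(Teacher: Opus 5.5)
Your proposal is correct and follows essentially the paper's proof: the identity $\ddc U=U(\ddc\log U+i\,\partial\log U\wedge\bar\partial\log U)$ in the coframe $(dw,\theta)$, a Schur-complement determinant whose leading part is cancelled by the normalized transverse equation, $E$ written as a smooth function of $(x,\tau,y)$ vanishing at $x=0$ (the paper makes your ``each $\partial_s$ gains a factor $\sigma^{-1}$'' precise via $\partial_s=-x^2\partial_x+x\partial_\tau$ and the polynomial expansion $G=G_0+xG_1+x^2G_2$), and absorption of the mixed Ricci block by a Schur complement after conjugating by $\operatorname{diag}(I,\sigma)$, with $b$ fixed before $L$. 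One small correction: because $\ddc s=\pi^*\omega_0$, $\ddc F$ also contains the term $F_s\,\pi^*\omega_0$, so $-nb\log\sigma$ contributes $-nb\sigma^{-1}\pi^*\omega_0$ to the horizontal block as well as the radial margin; this is harmless because the constant in your bound $A\ge\delta-C\sigma^{-1}$ may depend on $b$ and $L$ is chosen last.
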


\begin{proof}
All angular derivatives hold $s$ fixed, hence also hold $\sigma,\tau$
fixed. In an affine chart on $Y$, use the coframe $(dw,\theta)$.
For any real $S^1$-invariant function $F(s,y)$, the chain rule gives
\begin{align}
 \ddc F={}&F_s\omega_0+\ddc_YF
 +i\partial_YF_s\wedge\bar\theta
 +i\theta\wedge\bar\partial_YF_s
 +iF_{ss}\theta\wedge\bar\theta.\label{ext:chain}
\end{align}
Pullbacks from $Y$ are suppressed in formulas. The first term
comes from $\ddc s=\pi^*\omega_0$; $\theta$ must not be treated
as the differential of a holomorphic coordinate.

Set $h=\log U_{\rm ext}$ and
\[
 q=h_s=a+\frac{b+\phi_\tau}{\sigma},\qquad
 \eta=\partial_Y\phi,\qquad
 \xi=\partial_Yq=\frac{\partial_Y\phi_\tau}{\sigma}.
\]
Since $\sigma_s=1$ and $\tau_s=\sigma^{-1}$,
\begin{equation}\label{ext:q}
 q_s=\frac{\phi_{\tau\tau}-\phi_\tau-b}{\sigma^2}.
\end{equation}
The identity
$\ddc U_{\rm ext}/U_{\rm ext}
=\ddc h+i\partial h\wedge\bar\partial h$
gives the coefficient matrix
\begin{equation}\label{ext:G}
 G=
 \begin{pmatrix}
 B+\eta\eta^*&q\eta+\xi\\
 (q\eta+\xi)^*&q^2+q_s
 \end{pmatrix},\qquad
 B=\beta+\frac{b+\phi_\tau}{\sigma}\omega_0.
\end{equation}
Here and below a form in a matrix denotes its Hermitian coefficient
matrix. Norms of such matrices use $\operatorname{diag}(\omega_0,1)$.

Put $P=b+\phi_\tau$, $T=\phi_{\tau\tau}-\phi_\tau-b$ and
$\zeta=\partial_Y\phi_\tau$. Treating $x,\tau$ as independent
variables, the following is an exact polynomial identity:
\begin{equation}\label{ext:polynomial}
\begin{split}
 G&=G_0+xG_1+x^2G_2,\\
 G_0&=\begin{pmatrix}\beta+\eta\eta^*&a\eta\\a\eta^*&a^2\end{pmatrix},
 \qquad
 G_1=\begin{pmatrix}
 P\omega_0&P\eta+\zeta\\(P\eta+\zeta)^*&2aP
 \end{pmatrix},\\
 G_2&=\begin{pmatrix}0&0\\0&P^2+T\end{pmatrix}.
\end{split}
\end{equation}
For column vectors $v,t$ of the indicated dimensions,
\[
 (v,t)^*G_0(v,t)=v^*\beta v+|\eta^*v+at|^2.
\]
Uniform positivity of $\beta$, boundedness of $\eta$, and $a>0$
imply $c_0I\le G_0\le C_0I$. For example,
\[
 |t|^2\le\frac2{a^2}|\eta^*v+at|^2+
 \frac{2|\eta|^2}{a^2}|v|^2
\]
proves the lower bound. For fixed $b$, all fixed-order derivatives
of $G_1,G_2$ in $\tau,y$ are bounded. There is therefore $x_b>0$
such that $G\ge c_0I/2$ for $0\le x\le x_b$, uniformly in
$\tau,y$. The corresponding upper bound also holds. Choose
$L\ge\max\{1,x_b^{-1}\}$. Since
\[
 \omegaE=e^s(\pi^*\omega_0+i\theta\wedge\bar\theta),
\]
these bounds and boundedness of $\phi$ prove \eqref{ext:metric}.
They also show that $G,G^{-1}$ and all their fixed-order
$s,y$ derivatives are uniformly bounded.

For the determinant, use $z_n=\upsilon$, $w^j=z_j/\upsilon$.
Then
\[
 s=\log|\upsilon|^2+\log(1+|w|^2),\qquad
 dw^1\wedge\cdots\wedge dw^m\wedge\theta
 =\upsilon^{-n}dz^1\wedge\cdots\wedge dz^n.
\]
Since $\det\omega_0=(1+|w|^2)^{-n}$, the change of coframe gives
\begin{equation}\label{ext:det}
 \det((U_{\rm ext})_{j\bar k})
 =U_{\rm ext}^ne^{-ns}D,\qquad
 D=\frac{\det G}{\det\omega_0},\qquad
 D_0=\frac{\det G_0}{\det\omega_0}
 =a^2\frac{\det\beta}{\det\omega_0}.
\end{equation}
The normalized transverse equation yields
\[
 \log D_0=(n+1)\log a-n\phi-\psi.
\]
Taking the negative logarithm of \eqref{ext:det} therefore gives
\eqref{ext:f}, with
$E=-\log(D/D_0)$; the terms involving $n\phi$ cancel exactly.

For derivative estimates, define
\[
 \widetilde E(x,\tau,y)=
 -\log\frac{\det(G_0+xG_1+x^2G_2)}{\det G_0}.
\]
It and all its fixed-order derivatives are uniformly bounded for
$0\le x\le x_b$, and $\widetilde E(0,\tau,y)=0$.
The integral Taylor formula shows that every fixed $\tau,y$
derivative without an $x$ derivative is $O_b(x)$.
Along \eqref{ext:variables},
\[
 \partial_s=-x^2\partial_x+x\partial_\tau.
\]
Consequently
\begin{align*}
 E_s&=-x^2\widetilde E_x+x\widetilde E_\tau,\\
 E_{ss}&=2x^3\widetilde E_x+x^4\widetilde E_{xx}
 -2x^3\widetilde E_{x\tau}
 -x^2\widetilde E_\tau+x^2\widetilde E_{\tau\tau}.
\end{align*}
Because $\widetilde E_\tau,\widetilde E_{\tau\tau}=O_b(x)$,
these identities prove \eqref{ext:error}, also after angular
differentiation. This argument supplies derivative bounds;
they are not obtained by formal differentiation of an $O$ term.

Apply \eqref{ext:chain} to $f_{\rm ext}$. Its horizontal,
mixed and radial blocks are
\begin{equation}\label{ext:ricci-blocks}
\begin{split}
 H&=\alpha-\frac{nb-\Psi_\tau}{\sigma}\omega_0
       +E_s\omega_0+\ddc_YE,\\
 M&=\frac{\partial_Y\Psi_\tau}{\sigma}+\partial_YE_s,\\
 R&=\frac{nb+\Psi_{\tau\tau}-\Psi_\tau}{\sigma^2}+E_{ss}.
\end{split}
\end{equation}
Set
\[
 A_0=\sup|\Psi_\tau|,\quad
 A_1=\sup|\partial_Y\Psi_\tau|_{\omega_0},\quad
 A_2=\sup|\Psi_{\tau\tau}-\Psi_\tau|.
\]
These constants precede both $b$ and $L$. Choose $b>0$ with
\begin{equation}\label{ext:b}
 nb\ge A_2+\frac{2(A_1+1)^2}{\delta}+2.
\end{equation}
For this fixed $b$, choose $C_b$ dominating the relevant errors
in \eqref{ext:ricci-blocks}. Increase $L$ so that, in addition
to the preceding requirements,
\[
 L\ge C_b,\qquad
 \frac{nb+A_0+2C_b}{L}\le\frac\delta2.
\]
Then
\[
 H\ge\frac\delta2\omega_0,\qquad
 |M|\le\frac{A_1+1}{\sigma},\qquad
 R\ge\frac{nb-A_2-1}{\sigma^2}.
\]
The Schur complement satisfies
\begin{equation}\label{ext:schur}
 R-M^*H^{-1}M
 \ge\frac{nb-A_2-1-2(A_1+1)^2/\delta}{\sigma^2}
 \ge\sigma^{-2}.
\end{equation}
For the quantitative form \eqref{ext:ricci-margin}, conjugate
the Ricci matrix by $\operatorname{diag}(I,\sigma)$.
Its horizontal block is uniformly positive, its mixed block is
uniformly bounded, and its radial Schur complement is at least one.
Completing the square shows that the conjugated matrix is bounded
below by $c_bI$. Indeed the substitution
$v\mapsto v+H^{-1}(\sigma M)t$ and its inverse have uniformly
bounded norms. This proves \eqref{ext:ricci-margin}.
Finally, \eqref{ext:f} and boundedness of $\Psi,E$ imply
\eqref{ext:density}.
\end{proof}

\begin{remark}\label{ext:parameters-remark}
All estimates hold uniformly for every $L$ above the stated thresholds.
Increasing $L$ changes the argument $\tau$ of the fixed path but
does not change any path bound. The radial Ricci margin is of
order $\sigma^{-2}$, so the interior correction must be controlled
through the derivatives that enter $\ddc\log\det$ at a smaller
order. The next section establishes precisely that control.
\end{remark}

\section{A smooth K\"ahler filling}
\label{cap:section}

We now fill the origin of the exterior model by a smooth cap. We first
extend the radial Ricci potential as a nondecreasing convex function,
and then recover the K\"ahler potential by an explicit radial
Monge--Amp\`ere equation.  The resulting error at infinity is exponentially
small in the logarithmic radius.  Throughout this section, the path and
$b>0$ have already been fixed.  Constants denoted by $C_b$ or $C_{b,j,k}$
are independent of $L$ once $L$ exceeds a fixed lower bound; they may also
depend on $n,a$ and the fixed path.  We retain the notation of
\eqref{ext:variables}--\eqref{ext:potential} and arrange that
\begin{equation}\label{cap:stationary}
 \phi(\tau,y)=\psi(\tau,y)=0\qquad(0\leq\tau\leq1).
\end{equation}
Increasing $L$ below does not change this path.

\subsection{The radial filling}

\begin{lemma}[The radial cap]\label{cap:radial}
For all sufficiently large $L$, there is a smooth radial strictly
plurisubharmonic function $U_c$ on $\C^n$ whose K\"ahler metric has
nonnegative Ricci curvature.  It equals $\lambda_L|z|^2$ near the origin,
where $\lambda_L>0$.  Writing $U_c=U_c(s)$ away from the origin and
$V(s)=e^{as}(L+s)^b$, there is a real constant $B_L$ such that
\begin{equation}\label{cap:first-integral}
 (U_c'(s))^n=(V'(s))^n+B_L\quad(s\geq0),
 \qquad |B_L|\leq C_b L^{nb}.
\end{equation}
\end{lemma}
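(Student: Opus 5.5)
The plan is to work entirely with radial potentials in $s=\log|z|^2$. I will prescribe the Ricci potential rather than the Kähler potential, and then recover $U_c$ from the radial Monge--Ampère equation by integrating once.

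\textbf{Radial formulas.} By \eqref{ext:chain}, a radial $F(s)$ has $\ddc F=F'\,\pi^*\omega_0+F''\,i\theta\wedge\bar\theta$. Consequently $F$ is strictly plurisubharmonic on $\C^n\setminus\{0\}$ if and only if $F'>0$ and $F''>0$. In that case the coframe computation behind \eqref{ext:det} gives
\[
 \det(F_{j\bar k})=e^{-ns}(F')^{m}F''=\tfrac1n e^{-ns}\bigl((F')^n\bigr)'.
\]
Hence the Ricci potential is $f=-\log\det=ns-\log\bigl(((F')^n)'/n\bigr)$. It is again radial, and $\Ric=f'\pi^*\omega_0+f''\,i\theta\wedge\bar\theta$. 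So $\Ric\ge0$ follows from $f'\ge0$ and $f''\ge0$. The formula for $f$ involves only the derivative of $(F')^n$. Therefore any $U_c$ with $(U_c')^n=(V')^n+B$ on $s\ge0$ has exactly the Ricci potential $f_V$ of $V$ there, which is the form of \eqref{cap:first-integral}. The construction therefore reduces to the following recipe. First, choose a convex nondecreasing $f_c$ on $\R$ with $f_c=f_V$ for $s\ge0$ and $f_c$ constant for $s\ll0$. Second, set
\[
 (U_c')^n(s)=\int_{-\infty}^s n e^{nt-f_c(t)}\,dt,\qquad U_c(s)=\int_{-\infty}^s U_c'.
\]

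\textbf{Step 1: convexity of $f_V$.} I will check that $f_V$ is increasing and convex on $s\ge-3$ for $L$ large. With $\sigma=L+s$, we have $V'=e^{as}\sigma^{b-1}(a\sigma+b)$, and $W=(V')^n$ satisfies $W'=nV'^{m}V''>0$. A direct expansion in $1/\sigma$ gives
\[
 f_V=\kappa s-nb\log\sigma+\mathrm{const}+O(\sigma^{-1}),
\]
with the error admitting differentiated bounds $O(\sigma^{-2})$ and $O(\sigma^{-3})$, by the same mechanism as \eqref{ext:error} with $\phi=\psi=0$. Hence $f_V'=\kappa-nb/\sigma+O(\sigma^{-2})>0$ and $f_V''=nb/\sigma^2+O(\sigma^{-3})>0$ once $L\ge C_b$.

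\textbf{Step 2: the extension.} Put $c_0=f_V(-2)$. I will take $f_c=\mathrm{M}_\varepsilon(f_V,c_0)$ on $s\ge-3$, where $\mathrm{M}_\varepsilon$ is a standard smooth regularized maximum (convex and nondecreasing in each argument, equal to the larger argument when the two differ by more than $\varepsilon$). I will take $f_c=c_0$ for $s\le-3$. Since $f_V$ is increasing with slope about $\kappa$, $f_c$ equals $f_V$ for $s\ge-1$ and equals $c_0$ for $s\le-3+\cdots$. It is smooth, convex and nondecreasing, because a regularized max of convex nondecreasing functions has these properties. Moreover $f_V(0)-C\le f_c\le f_V(0)$ on $s\le0$, with $C\approx3(\kappa+1)$.

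\textbf{Step 3: recovering $U_c$.} For $s\le-3$ we get $(U_c')^n=e^{ns-c_0}$. Thus $U_c=\lambda_Le^s=\lambda_L|z|^2$ there, with $\lambda_L=e^{-c_0/n}$, and $U_c$ is smooth at the origin. Everywhere, $(U_c')^n$ is positive with positive derivative, so $U_c'>0$ and $U_c''>0$; this gives strict plurisubharmonicity. The Ricci potential equals $f_c$, so $\Ric\ge0$ holds by Step 2 together with the Euclidean region near zero. For $s\ge0$, $(U_c')^n-W$ has zero derivative, so it is a constant
\[
 B_L=\int_{-\infty}^0ne^{nt-f_c}\,dt-W(0).
\]
Now $W(0)=(aL^b+bL^{b-1})^n\le C_bL^{nb}$. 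Since $f_c\ge f_V(0)-C$ on $s\le0$, the integral is at most $Ce^{-f_V(0)}=CW'(0)/n\le C_bL^{nb}$. This gives $|B_L|\le C_bL^{nb}$.

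The main obstacle is Step 2: producing a convex nondecreasing extension that agrees with $f_V$ to infinite order at $s=0$ and drops by only a bounded amount. The regularized maximum against a constant handles this, provided Step 1's convexity holds on a slightly larger interval $s\ge-3$, which requires $L$ large.
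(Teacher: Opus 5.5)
Your proposal is correct and follows essentially the paper's route. You prescribe a convex nondecreasing radial Ricci potential that agrees with $f_V$ for $s\ge -1$ and is constant near $-\infty$, recover $U_c$ from $(U_c')^n=n\int_{-\infty}^s e^{nt-f_c(t)}\,dt$, and bound $B_L$ using $e^{-f_V(0)}\asymp_b L^{nb}$ together with a uniform bound on the drop of $f_c$ over $s\le 0$. The one difference is the smoothing device: the paper cuts off the derivative, $f_c'=\chi f_{\rm rad}'$, whose convexity comes from $\chi' f'_{\rm rad}+\chi f''_{\rm rad}\ge0$, while you take a regularized maximum with the constant $f_V(-2)$; both yield the required extension with an $L$-independent drop.
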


\begin{proof}
For a radial potential $W(s)$, where $s=\log|z|^2$, the Euclidean
complex Hessian has eigenvalues $e^{-s}W'$ with multiplicity $n-1$ and
$e^{-s}W''$ with multiplicity one.  Indeed, with $r^2=|z|^2$,
\begin{equation}\label{cap:radial-hessian}
 W_{j\bar k}
 =\frac{W'}{r^2}\delta_{jk}
   +\frac{W''-W'}{r^4}\bar z_jz_k,
 \qquad
 \det(W_{j\bar k})=e^{-ns}(W')^{n-1}W''.
\end{equation}
For $s\geq-2$, put $\sigma=L+s$ and
\[
 p=a+\frac b\sigma,\qquad
 k=p^2-\frac b{\sigma^2}.
\]
Then $V'=Vp$, $V''=Vk$, and the radial Ricci potential is
\begin{equation}\label{cap:radial-f}
 f_{\rm rad}(s)
 :=-\log\det(V_{j\bar k})
 =\kappa s-nb\log\sigma-(n-1)\log p-\log k.
\end{equation}
Here we used $n(1-a)=\kappa$.  For sufficiently large $L$, $p$ and $k$
have uniform positive lower and upper bounds on $s\geq-2$.
Moreover, their logarithmic derivatives satisfy
\[
 (\log p)',(\log k)'=O_b(\sigma^{-2}),\qquad
 (\log p)'',(\log k)''=O_b(\sigma^{-3}).
\]
Consequently
\begin{equation}\label{cap:radial-f-derivatives}
 f_{\rm rad}'=\kappa-\frac{nb}{\sigma}+O_b(\sigma^{-2}),
 \qquad
 f_{\rm rad}''=\frac{nb}{\sigma^2}+O_b(\sigma^{-3}).
\end{equation}
Since $\kappa,b>0$, increasing $L$ makes both derivatives positive on
$s\geq-2$.

Choose a smooth nondecreasing function $\chi:\R\to[0,1]$ which is zero
on $(-\infty,-2]$ and one on $[-1,\infty)$.  Define
\[
 v(s)=
 \begin{cases}
  0,&s\leq-2,\\
  \chi(s)f_{\rm rad}'(s),&s>-2,
 \end{cases}
 \qquad
 f_c(s)=f_{\rm rad}(0)+\int_0^s v(t)\,dt.
\]
This piecewise definition does not require evaluating $f_{\rm rad}$
where $L+s$ is nonpositive.  Flatness of $\chi$ at $-2$ implies that
$v$ and $f_c$ are smooth on $\R$.  We have
\begin{equation}\label{cap:convex-f}
 f_c'\geq0,\qquad f_c''\geq0,\qquad
 f_c=f_{\rm rad}\ \text{on }[-1,\infty).
\end{equation}
Indeed, in the transition interval,
$f_c''=\chi'f_{\rm rad}'+\chi f_{\rm rad}''\geq0$.
The function $f_c$ is a constant $c_L$ on $(-\infty,-2]$.

Define
\begin{equation}\label{cap:ode}
 Y(s)=\left(n\int_{-\infty}^s e^{nt-f_c(t)}\,dt\right)^{1/n},
 \qquad U_c(s)=\int_{-\infty}^sY(t)\,dt.
\end{equation}
Both integrals converge at their lower endpoints.  They give
\[
 U_c'=Y>0,\qquad
 U_c''=Y'=\frac{e^{ns-f_c(s)}}{Y^{n-1}}>0,
 \qquad
 e^{-ns}(U_c')^{n-1}U_c''=e^{-f_c}.
\]
Thus \eqref{cap:radial-hessian} proves strict plurisubharmonicity away
from the origin.  In the adapted coframe with $\theta=\partial s$,
\begin{equation}\label{cap:radial-ricci}
 \Ric(dd^cU_c)=dd^cf_c
 =f_c'\omega_0+i f_c''\theta\wedge\bar\theta\geq0.
\end{equation}
For $s\leq-2$, direct integration in \eqref{cap:ode} gives
\[
 Y(s)=e^{-c_L/n}e^s,
 \qquad U_c(s)=e^{-c_L/n}e^s.
\]
Hence $U_c(z)=\lambda_L|z|^2$ near zero, with
$\lambda_L=e^{-c_L/n}>0$, proving smoothness, positivity, and Ricci
nonnegativity also at the origin.

On $s\geq0$, equality of $f_c$ and $f_{\rm rad}$ gives
\[
 \frac{d}{ds}Y^n=n e^{ns-f_{\rm rad}}
 =\frac{d}{ds}(V')^n.
\]
Their difference is the constant $B_L$ in
\eqref{cap:first-integral}.  To bound it uniformly in $L$, observe that
$f_c(t)-f_{\rm rad}(0)$ is uniformly bounded for all $t\leq0$:
its derivative is supported in $[-2,0]$ there and is uniformly bounded
by \eqref{cap:radial-f-derivatives}.  Also,
\[
 e^{-f_{\rm rad}(0)}=L^{nb}p(0)^{n-1}k(0)\asymp_b L^{nb}.
\]
Therefore
\[
 Y(0)^n=n\int_{-\infty}^0e^{nt-f_c(t)}\,dt\leq C_bL^{nb},
 \qquad (V'(0))^n=L^{nb}(a+b/L)^n\leq C_bL^{nb}.
\]
This proves the asserted bound for $B_L$.
\end{proof}

\subsection{Decay of the filling error}

\begin{lemma}[Size of the cap correction]\label{cap:error}
Assume $n\geq2$, and set
\begin{equation}\label{cap:activation}
 S_0=(e-1)L,\qquad
 \varepsilon_L(s)=|B_L|e^{-nas}(L+s)^{-nb}.
\end{equation}
For sufficiently large $L$, there is a constant $C_L\in\R$ such that
$U_c-V=C_L+h$ on $s\geq S_0$, where, for every fixed integer $j\geq0$,
\begin{equation}\label{cap:h-estimate}
 |h^{(j)}(s)|
 \leq C_{b,j}|B_L|e^{-(n-1)as}(L+s)^{-(n-1)b}.
\end{equation}
In addition,
\begin{equation}\label{cap:weighted-error}
 \sup_{s\geq S_0}(L+s)^2\varepsilon_L(s)
 \leq C_b L^2 e^{-na(e-1)L}\longrightarrow0.
\end{equation}
\end{lemma}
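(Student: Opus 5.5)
Our plan is to integrate the first integral \eqref{cap:first-integral} explicitly. On $s\ge0$ we have $U_c'=(V'^n+B_L)^{1/n}=V'(1+\varepsilon)^{1/n}$, where $\varepsilon:=B_L/(V')^n$. Since $V'=Vp=e^{as}\sigma^bp$ and $p$ is uniformly bounded above and below by Lemma~\ref{cap:radial}, we get $|\varepsilon|\asymp_b\varepsilon_L(s)$. The first step is to show that $\varepsilon_L$ is tiny on $s\ge S_0$. Using $|B_L|\le C_bL^{nb}$ and $L+s\ge L$,
\[
 \varepsilon_L(s)\le C_b e^{-nas}\Bigl(\frac{L}{L+s}\Bigr)^{nb}\le C_b e^{-nas}.
\]
Then
\[
 (L+s)^2\varepsilon_L(s)\le C_b(L+s)^2e^{-nas},
\]
and the right side is decreasing for $s\ge S_0$ once $L$ is large, because $2/(L+s)<na$. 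Evaluating at $s=S_0$, where $L+S_0=eL$, gives \eqref{cap:weighted-error}. In particular $|\varepsilon|\le1/2$ on $s\ge S_0$.

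Next we set
\[
 h'=U_c'-V'=V'\bigl((1+\varepsilon)^{1/n}-1\bigr)=V'\,G(\varepsilon),
\]
where $G$ is real analytic on $|\varepsilon|\le1/2$ with $|G(\varepsilon)|\le C|\varepsilon|$. This gives
\[
 |h'|\le C V'|\varepsilon|=C|B_L|(V')^{1-n}\le C_b|B_L|e^{-(n-1)as}\sigma^{-(n-1)b}.
\]
Because $n\ge2$, this bound is integrable at infinity, so we may define
\[
 h(s)=-\int_s^\infty h'(t)\,dt,\qquad C_L=(U_c-V)(S_0)-h(S_0).
\]
Then $U_c-V=C_L+h$, and integrating the exponential bound gives the case $j=0$ of \eqref{cap:h-estimate}, losing only a constant factor $1/((n-1)a)$.

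For the higher derivatives, the key is to write $\log V'=as+b\log\sigma+\log p$. Every $s$-derivative of $\log V'$ is $O_b(1)$ uniformly, since $p$ and its derivatives are controlled by Lemma~\ref{cap:radial}. Consequently every derivative of $(V')^{-n}$ and of $(V')^{1-n}$ is bounded by a constant times the function itself. Next we expand
\[
 G(\varepsilon)=\varepsilon\,\widetilde G(\varepsilon),
\]
so that
\[
 h'=B_L(V')^{1-n}\widetilde G\bigl(B_L(V')^{-n}\bigr),
\]
with $\widetilde G$ smooth and bounded with all derivatives on $|\varepsilon|\le1/2$. By Leibniz and Fa\`a di Bruno, each $\partial_s^{j-1}h'$ is bounded by $C_{b,j}|B_L|(V')^{1-n}$; here $\varepsilon^{(i)}=O(|\varepsilon|)\le O(1)$. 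This gives \eqref{cap:h-estimate} for $j\ge1$.

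The main point of care is uniformity in $L$. The constants $C_{b,j}$ must not depend on $L$, which is why all bounds go through $\sigma\ge L$ and the $L$-independent control of $p$, $k$ and $\log V'$ rather than through absolute bounds. Otherwise the argument is routine.
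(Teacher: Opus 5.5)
Your proposal is correct and follows the paper's proof step for step. You use the same quantity $Z=B_L/(V')^n$, the same exact factorization $(U_c-V)'=B_L(V')^{1-n}A(Z)$ (your $\widetilde G$ is the paper's $A$), the same definition of $h$ by integrating from infinity using monotonicity of $(L+t)^{-(n-1)b}$, and the same logarithmic-derivative monotonicity argument at $L+S_0=eL$ for \eqref{cap:weighted-error}. The only difference is cosmetic: you discard the factor $(L/(L+s))^{nb}\le1$ before maximizing, which the paper keeps, and this changes only the constant.
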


\begin{proof}
Put $Z=B_L/(V')^n$.  The bounds $V'\asymp_b e^{as}\sigma^b$ and
\eqref{cap:first-integral} imply
\[
 |Z(s)|\leq C_b\varepsilon_L(s)
 \leq C_b e^{-nas}(L/\sigma)^{nb}.
\]
Thus $|Z|\leq1/2$ on $s\geq S_0$ when $L$ is large.  Every fixed
derivative of $Z$ is $O_b(\varepsilon_L)$, because every positive-order
derivative of $\log V'=as+b\log\sigma+\log p$ is uniformly bounded.
The function
\[
 A(z)=\begin{cases}
 ((1+z)^{1/n}-1)/z,&z\ne0,\\
 1/n,&z=0
 \end{cases}
\]
is smooth with bounded derivatives on $[-1/2,1/2]$.  Taking the positive
$n$th root in \eqref{cap:first-integral} gives the exact identity
\begin{equation}\label{cap:correction-derivative}
 (U_c-V)'=B_L(V')^{1-n}A(Z).
\end{equation}
Differentiating this identity any fixed number of times proves
\eqref{cap:h-estimate} for $j\geq1$.  The right-hand side of
\eqref{cap:correction-derivative} is integrable at infinity since
$(n-1)a>0$.  Hence $U_c-V$ has a finite limit $C_L$, and
\[
 h(s)=-\int_s^\infty (U_c-V)'(t)\,dt.
\]
Since $(L+t)^{-(n-1)b}$ decreases in $t$, this formula proves the same
bound for $j=0$.

Finally,
\[
 \sigma^2\varepsilon_L(s)
 \leq C_bL^{nb}e^{-nas}(L+s)^{2-nb}.
\]
For large $L$, the logarithmic derivative of the expression on the
right with respect to $s$ is
$-na+(2-nb)/(L+s)<0$.  Its maximum on $[S_0,\infty)$ is therefore
at $S_0$, where $L+S_0=eL$.  This gives
\eqref{cap:weighted-error}.
\end{proof}

\subsection{The global potential}

\begin{proposition}[The global K\"ahler metric]\label{cap:global}
After first fixing the path, then choosing $b$ as in
Proposition~\ref{ext:theorem}, and finally choosing $L$ sufficiently large,
there is a smooth strictly plurisubharmonic function $\mathcal U$ on
$\C^n$ such that $\omega=dd^c\mathcal U$ has nonnegative Ricci curvature
everywhere and strictly positive Ricci curvature outside a compact set.
The potential equals $\lambda_L|z|^2$ near the origin.  On $s>S_0$,
\begin{equation}\label{cap:global-asymptotic}
 \mathcal U=U_{\rm ext}+C_L+h(s),
\end{equation}
with $h$ as in Lemma~\ref{cap:error}.  On the same end,
\begin{equation}\label{cap:end-comparison}
 g_\omega\asymp r^{2a-2}(L+\log r^2)^b g_E,
 \qquad
 \det(\mathcal U_{j\bar k})\asymp
 r^{-2\kappa}(L+\log r^2)^{nb}.
\end{equation}
\end{proposition}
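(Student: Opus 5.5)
The plan is to define $\mathcal U$ by a single formula that combines the radial cap of Lemma~\ref{cap:radial} with the transverse factor of the exterior potential. On $\C^n$ set
\[
 \mathcal U=U_c(s)+V(s)\bigl(e^{\phi(\tau,y)}-1\bigr)\quad(|z|\ge1),\qquad \mathcal U=U_c\quad(|z|\le1).
\]
By \eqref{cap:stationary}, $\phi=0$ for $\tau\le1$, that is, for $s\le(e-1)L=S_0$. Hence $\mathcal U=U_c$ on $s\le S_0$, so the two pieces agree on an open neighbourhood of the sphere $|z|=1$ and $\mathcal U$ is smooth. On $s\le S_0$, every assertion (strict plurisubharmonicity, $\Ric\ge0$, and $\mathcal U=\lambda_L|z|^2$ near $0$) comes directly from Lemma~\ref{cap:radial}. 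On $s>S_0$, Lemma~\ref{cap:error} gives $U_c=V+C_L+h$, and therefore $\mathcal U=Ve^{\phi}+C_L+h=U_{\rm ext}+C_L+h$, which is \eqref{cap:global-asymptotic}.

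The main work is on the end $s\ge S_0$, where $\mathcal U$ is a small perturbation of $U_{\rm ext}$. Since constants do not affect $\ddc$, we have $\ddc\mathcal U=\omega_{\rm ext}+\ddc h(s)$. By \eqref{ext:chain},
\[
 \ddc h=h'\omega_0+ih''\theta\wedge\bar\theta .
\]
By \eqref{cap:h-estimate},
\[
 |h'|,|h''|\le C_b|B_L|e^{-(n-1)as}\sigma^{-(n-1)b}=C_b\,\varepsilon_L\,e^{as}\sigma^b .
\]
Divided by $U_{\rm ext}\asymp e^{as}\sigma^b$, this says that in the normalized coefficient matrix $G$ of \eqref{ext:G} (coframe $(dw,\theta)$) the perturbation has size $O_b(\varepsilon_L)$. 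The same holds for every fixed $s,y$ derivative, because $h^{(j)}$ obeys the same bound and the derivatives of $\log U_{\rm ext}$ are bounded. Since $G\ge c_0I/2$, the perturbed matrix $G+\delta G$ stays positive for large $L$. This gives strict plurisubharmonicity and the metric comparison in \eqref{cap:end-comparison} via \eqref{ext:metric}.

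For the Ricci form, write
\[
 f=-\log\det(\mathcal U_{j\bar k})=f_{\rm ext}-\log\det(I+G^{-1}\delta G)=f_{\rm ext}+F(s,y),
\]
with $\|F\|_{C^k(Y)},\|F_s\|_{C^k},\|F_{ss}\|_{C^k}\le C_{b,k}\varepsilon_L(s)$. This bound follows because $\det(I+X)$ is smooth near $X=0$ and every entry of $X$ and its $s$-derivatives is $O(\varepsilon_L)$. The determinant comparison in \eqref{cap:end-comparison} then follows from \eqref{ext:density}.

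Now apply \eqref{ext:chain} to $F$. Its horizontal, mixed and radial blocks are $F_s\omega_0+\ddc_YF$, $\partial_YF_s$ and $F_{ss}$, all of size $O_b(\varepsilon_L)$. Against the margin \eqref{ext:ricci-margin}, this is the step I expect to be the main obstacle: the radial margin is only $c_b\sigma^{-2}$, while the perturbation is $O(\varepsilon_L)$. After conjugating by $\operatorname{diag}(I,\sigma)$ as in the proof of Proposition~\ref{ext:theorem}, the perturbation becomes at most $C_b\sigma^2\varepsilon_L$ in every block. By \eqref{cap:weighted-error},
\[
 \sup_{s\ge S_0}\sigma^2\varepsilon_L\le C_bL^2e^{-na(e-1)L}\to0 .
\]
So for $L$ large this is below $c_b/2$, and we obtain
\[
 \Ric(\omega)\ge\tfrac{c_b}2\bigl(\pi^*\omega_0+\sigma^{-2}i\theta\wedge\bar\theta\bigr)>0
 \quad\text{on } s\ge S_0 .
\]
Together with $\Ric\ge0$ on the cap region, this yields nonnegative Ricci curvature everywhere and strict positivity outside the compact set $\{s\le S_0\}$.

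The ordering of constants is consistent: the path and $b$ are fixed first, $C_b$ is independent of $L$, and $L$ is taken large last. The thresholds come from Proposition~\ref{ext:theorem}, Lemma~\ref{cap:radial}, Lemma~\ref{cap:error}, and the final smallness condition above.
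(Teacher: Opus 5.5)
Your proposal is correct and follows the paper's proof essentially step by step. Your $\delta G$ and $F$ are the paper's $Q$ and $J$, and the key step is the same: conjugate $dd^cF$ by $\operatorname{diag}(I,\sigma)$, bound it by $C_b\sigma^2\varepsilon_L$, and absorb it into the weighted exterior margin using \eqref{cap:weighted-error}; gluing at $|z|=1$ instead of at $s=S_0$ yields the same function, since $\phi$ vanishes for $0\le\tau\le1$.
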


\begin{proof}
For $z\ne0$, define
\begin{equation}\label{cap:global-potential}
 \mathcal U(z)=
 \begin{cases}
 U_c(s),&s\leq S_0,\\[1mm]
 U_c(s)+V(s)\bigl(e^{\phi(\tau,[z])}-1\bigr),&s>S_0,
 \end{cases}
 \qquad \mathcal U(0)=0.
\end{equation}
At $s=S_0$ one has $\tau=1$.  Since the smooth function $\phi$ vanishes
for $0\leq\tau\leq1$, every jet of
$e^{\phi(\tau,y)}-1$ vanishes at $\tau=1$.  Thus
\eqref{cap:global-potential} is a smooth gluing along the sphere
$s=S_0$.  Smoothness and strict plurisubharmonicity at the origin follow
from the exact quadratic expression in Lemma~\ref{cap:radial}.
Equation~\eqref{cap:global-asymptotic} follows from the definition.

Here are the estimates that preserve positivity on the end; in
particular, this step uses derivatives of the metric error, not only its
pointwise size.  In the adapted coframe $(dw,\theta)$, write
\[
 dd^cU_{\rm ext}\longleftrightarrow U_{\rm ext}G.
\]
The exterior computation gives uniform ellipticity of $G$ relative to
$\operatorname{diag}(\omega_0,1)$ and bounds on all its fixed mixed
$s,y$ derivatives.  Since $h$ is radial,
\[
 dd^ch=h'\omega_0+i h''\theta\wedge\bar\theta.
\]
Consequently
\begin{equation}\label{cap:matrix-error}
 dd^c\mathcal U\longleftrightarrow U_{\rm ext}(G+Q),
 \qquad
 Q=\frac{e^{-\phi}}V
 \begin{pmatrix}h'\omega_0&0\\0&h''\end{pmatrix}.
\end{equation}
For every fixed $j,k\geq0$, Lemma~\ref{cap:error} and the uniform
derivative bounds on $\phi$ imply
\begin{equation}\label{cap:Q-derivatives}
 \|\partial_s^j\nabla_Y^k Q\|\leq C_{b,j,k}\varepsilon_L(s).
\end{equation}
All norms here are taken in a fixed finite collection of background
charts, equivalently using $\omega_0$; the estimates are uniform in
$L$.  Increasing $L$ makes $Q$ uniformly small, so $G+Q$ is positive
definite and uniformly comparable to $G$.

Define the change in Ricci potential by
\begin{equation}\label{cap:J}
 J=-\log\frac{\det(\mathcal U_{j\bar k})}
                    {\det((U_{\rm ext})_{j\bar k})}
   =-\log\frac{\det(G+Q)}{\det G}.
\end{equation}
The change-of-coframe determinants and the factor $U_{\rm ext}^n$
cancel in this ratio.  Uniform ellipticity of $G$ and
\eqref{cap:Q-derivatives} give
\begin{equation}\label{cap:J-derivatives}
 |\partial_s^j\nabla_Y^kJ|
 \leq C_{b,j,k}\varepsilon_L(s)
 \qquad(j,k\geq0\text{ fixed}).
\end{equation}
For completeness, the map
$(G,Q)\mapsto-\log\det(G+Q)+\log\det G$ is smooth on the relevant
uniformly elliptic set and vanishes identically when $Q=0$.
Its integral Taylor formula in $Q$, followed by the product and chain
rules, proves \eqref{cap:J-derivatives}: every term contains $Q$ or a
derivative of $Q$.  Only derivatives of $h$ through order four are
needed to estimate $dd^cJ$, all of which were proved above.

The matrix of $dd^cJ$ in the adapted coframe has blocks
\[
 H_J=J_s\omega_0+dd_Y^cJ,\qquad
 M_J=\partial_YJ_s,\qquad R_J=J_{ss}.
\]
Each is $O_b(\varepsilon_L)$.  Conjugating this matrix by
$\operatorname{diag}(I,\sigma)$ multiplies its mixed block by $\sigma$
and its radial block by $\sigma^2$.  Thus its operator norm in this
weighted frame is bounded by
\begin{equation}\label{cap:weighted-J}
 C_b\sigma^2\varepsilon_L(s).
\end{equation}
In exactly this frame the exterior Ricci lower bound
\eqref{ext:ricci-margin} has a uniform positive lower bound $c_b$.
By \eqref{cap:weighted-error}, choose $L$ large enough that
\eqref{cap:weighted-J} is at most $c_b/2$ for every $s\geq S_0$.
Since
\[
 \Ric(dd^c\mathcal U)=\Ric(dd^cU_{\rm ext})+dd^cJ,
\]
we obtain strict Ricci positivity on this end.  On $s\leq S_0$, the
metric is the radial cap and \eqref{cap:radial-ricci} gives Ricci
nonnegativity.  Positivity at the joining sphere also follows by
smoothness and the radial formula there.

Finally, the comparison of $G+Q$ with $G$ proves the metric estimate in
\eqref{cap:end-comparison}.  The determinant ratio in \eqref{cap:J} is
uniformly bounded above and below, so the exterior determinant estimate
gives the second assertion.  The cap depends on $L$, but its sole
contribution to the small-error estimate is bounded by
$|B_L|\leq C_bL^{nb}$.  Formula~\eqref{cap:weighted-error} therefore
justifies choosing $L$ last, with no circular dependence of constants.
\end{proof}

\section{Metric blow-downs and asymptotic geometry}
\label{lim:section}

Let \(\omega=\ddc\mathcal U\) be the global metric constructed in
Proposition~\ref{cap:global}, and write \(g=g_\omega\). All constants in
this section may depend on the fixed choices of \(b,L\) and the path.
For \(r\ge1\), put
\begin{equation}\label{lim:scales}
 \sigma(r)=L+\log r^2,\qquad
 w(r)=r^{a-1}\sigma(r)^{b/2},\qquad
 \rho(r)=r^a\sigma(r)^{b/2}.
\end{equation}
Here $\rho(r)$ is a comparison scale for the constructed metric,
distinct from the exact distance $\sqrt{U_\alpha}$ on a frozen cone.
Its comparison with $d_g(0,z)$ is proved below.
Enlarging constants across a fixed compact annulus, the construction gives
\begin{equation}\label{lim:comparison}
 c w(r)^2g_E\le g\le Cw(r)^2g_E\quad(r\ge1),\qquad
 c r^{-2\kappa}\sigma(r)^{nb}\ddV
 \le dV_g\le C r^{-2\kappa}\sigma(r)^{nb}\ddV.
\end{equation}
The convention \(g_\omega=\tfrac12\omega(\,\cdot\,,J\,\cdot\,)\)
is relevant here: \(dV_g=\det(\mathcal U_{j\bar k})\ddV\), with no
additional constant.

\subsection{Distance and volume control}

\begin{proposition}[Completeness and radial control]\label{lim:complete}
The metric \(g\) is complete. There are constants \(c,C,D>0\) such that,
for \(|z|\ge1\),
\begin{equation}\label{lim:distance}
 c\rho(|z|)-D\le d_g(0,z)\le C\rho(|z|)+D.
\end{equation}
For \(T\ge1\), the intrinsic diameter of the Euclidean ball
\(\overline B_T^E\) and its volume satisfy
\begin{equation}\label{lim:core-bounds}
 \diam_g^{\mathrm{intr}}(\overline B_T^E)
 \le D+C\rho(T),\qquad
 \Vol_g(B_T^E)\le D+C\rho(T)^{2n}.
\end{equation}
\end{proposition}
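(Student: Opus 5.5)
The plan is to reduce everything to the Euclidean comparison \eqref{lim:comparison} together with the elementary asymptotics of $w$ and $\rho$. Since $\rho(r)=r^a\sigma(r)^{b/2}$ with $\sigma=L+2\log r$, a direct computation gives
\[
 \rho'(r)=\Bigl(a+\tfrac{b}{\sigma(r)}\Bigr)w(r).
\]
Hence $\rho'\asymp w$ on $r\ge1$, with constants depending only on $a,b,L$. The same computation shows that $\rho$ is increasing and satisfies the doubling property $\rho(2r)\le C\rho(r)$.

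For the upper bound in \eqref{lim:distance}, take the radial segment from $0$ to $z$. Its length over $|z|\le1$ is bounded by a constant $D_0$, because the metric is smooth on the compact unit ball. Over $1\le r\le|z|$ the length is at most $\sqrt C\int_1^{|z|}w\,dr\le C'\rho(|z|)$. For the lower bound, let $\gamma$ be any curve from $0$ to $z$ with $|z|\ge1$. Consider its last portion, which goes from the unit sphere to $z$ and stays in $\{r\ge1\}$. On that portion,
\[
 |\dot\gamma|_g\ge\sqrt c\,w(|\gamma|)\,|\dot\gamma|_E\ge\sqrt c\,w(|\gamma|)\,\bigl|\tfrac{d}{dt}|\gamma|\bigr|.
\]
Integrating gives
\[
 L_g(\gamma)\ge\sqrt c\int_1^{|z|}w\,dr\ge c'(\rho(|z|)-\rho(1)).
\]
This yields \eqref{lim:distance}. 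Completeness then follows because $\rho\to\infty$. Indeed, $d_g(0,\cdot)$ is proper: any $d_g$-bounded set lies in some Euclidean ball, whose closure is compact. Hopf--Rinow then applies.

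For \eqref{lim:core-bounds}, join any two points of $\overline B_T^E$ through the origin along radial segments. Each radial segment stays inside $\overline B_T^E$ and has length at most $D_0+C\rho(T)$, which gives the intrinsic diameter bound. For the volume, split $B_T^E$ into $B_1^E$, whose volume is a constant, and the shell $1\le r\le T$. On the shell, \eqref{lim:comparison} bounds the volume by
\[
 C\int_1^T r^{-2\kappa}\sigma^{nb}r^{2n-1}\,dr.
\]
Since $2n-2\kappa=2na$, the integrand is $r^{2na-1}\sigma^{nb}$. This is comparable to the derivative of $r^{2na}\sigma^{nb}=\rho^{2n}$, because that derivative equals $r^{2na-1}\sigma^{nb}\bigl(2na+2nb/\sigma\bigr)$. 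Integration therefore gives the bound $C\rho(T)^{2n}$.

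The only point needing care is the lower distance bound. Curves may leave and re-enter the region $r\ge1$, so we need to restrict to the final exit from the unit sphere. With that choice the monotonicity argument above applies. Everything else follows routinely from \eqref{lim:comparison}, which I take as given by Proposition~\ref{cap:global} together with compactness of the annulus $\{1\le r\le e^{S_0/2}\}$.
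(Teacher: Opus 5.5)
Your proposal is correct and follows essentially the same route as the paper: the identity $\rho'=(a+b/\sigma)w$ to compare $\int_1^r w$ with $\rho$, radial segments for the upper bound and the diameter, restriction to the portion after the last exit through $\partial B_1^E$ for the lower bound, properness plus Hopf--Rinow for completeness, and the inequality $(\rho^{2n})'\ge 2na\,r^{2na-1}\sigma^{nb}$ for the volume. The only difference is cosmetic: you route all diameter paths through the origin along straight segments, which stay in the convex ball $\overline B_T^E$, whereas the paper speaks of paths through a fixed compact core.
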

\begin{proof}
Direct differentiation gives
\begin{equation}\label{lim:rho-derivative}
 \rho'(r)=\left(a+\frac b{\sigma(r)}\right)w(r),\qquad
 a\le a+\frac b{\sigma(r)}\le a+\frac bL.
\end{equation}
Consequently \(\int_1^r w(t)\,dt\) is comparable to
\(\rho(r)-\rho(1)\). A radial segment, followed inside \(B_1^E\)
by a path of uniformly bounded length, gives the upper bound in
\eqref{lim:distance}. For the lower bound, along any piecewise smooth
curve outside \(B_1^E\), with radial coordinate \(r(t)\),
\[
 |\dot\gamma(t)|_g\ge\sqrt c\,w(r(t))|\dot r(t)|.
\]
Apply this to the portion after the last crossing of \(\partial B_1^E\)
and use the fundamental theorem of calculus. This proves the lower bound
for arbitrary connecting curves, without assuming their radiality.
Since \(\rho(r)\to\infty\), closed metric balls lie in compact Euclidean
sets. The metric is smooth and positive on those sets, so they are compact
also in the metric topology. Hopf--Rinow therefore gives completeness.

Joining two points by radial segments and paths through a fixed compact
core gives the intrinsic diameter estimate; all these paths can be chosen
inside \(\overline B_T^E\). Finally, the volume comparison in
\eqref{lim:comparison} and \(n-\kappa=na\) reduce the volume estimate to
\[
 \int_1^T r^{2na-1}\sigma(r)^{nb}\,dr\le C\rho(T)^{2n}.
\]
This follows either by differentiating \(\rho^{2n}\), or directly from
\eqref{lim:rho-derivative}. The fixed core contributes only a constant.
\end{proof}

We record the elementary distance argument used below, so that convergence
of tensors on annuli is not confused with convergence of their completions.
\begin{lemma}[Shrinking-core distance lemma]\label{lim:core-lemma}
Let \(M=\overline B_A^E\). Suppose \(h_j,h\) are smooth positive
metrics on \(M\setminus\{0\}\), their compact intrinsic length completions
are identified with \(M\), and \(h_j\to h\) smoothly on every closed
annulus. The metrics \(h_j\) are allowed to be smooth at zero. Denote the
completed intrinsic distances by \(d_j,d\). If
\[
 \lim_{\delta\downarrow0}\limsup_{j\to\infty}
 \diam_{d_j}(\overline B_\delta^E)=0,
 \qquad
 \lim_{\delta\downarrow0}\diam_d(\overline B_\delta^E)=0,
\]
then \(\sup_{M\times M}|d_j-d|\to0\). In particular the identity
correspondence gives Gromov--Hausdorff convergence of these compact spaces.
\end{lemma}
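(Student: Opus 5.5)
The plan is a standard $\varepsilon/3$ argument that separates an annulus, where the smooth convergence acts, from a small core, which the diameter hypothesis controls. Fix $\epsilon>0$ and use the two hypotheses to choose $\delta\in(0,A/2)$ and $j_0$ with
\[
 \diam_{d_j}(\overline B_\delta^E)<\epsilon,\qquad
 \diam_d(\overline B_\delta^E)<\epsilon\qquad(j\ge j_0).
\]
The goal is to bound $|d_j(p,q)-d(p,q)|$ by a multiple of $\epsilon$ for all $p,q\in M$, once $j$ is large.

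\emph{Step 1: comparison of lengths.} On the compact annulus $K_\delta=\overline B_A^E\setminus B_\delta^E$, smooth convergence gives
\[
 (1+\eta_j)^{-2}h\le h_j\le(1+\eta_j)^2h\quad\text{on }K_\delta,\qquad \eta_j\to0.
\]
Hence every curve in $K_\delta$ has $h_j$-length and $h$-length within the factor $1+\eta_j$ of each other. The same smooth convergence on the shell between $\delta$ and $A$ shows that $\sup_j\diam_{d_j}(M)$ is finite. It also gives a uniform bound $\Lambda$ for $d$ and for every $d_j$. If needed, I would obtain this by joining each point to $\partial B_\delta^E$ radially and then using the small core diameter.

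\emph{Step 2: $d_j\le d+O(\epsilon)$.} Take $p,q$ and a curve $\gamma$ whose $h$-length is within $\epsilon$ of $d(p,q)$; near-minimizing curves exist because $d$ is the intrinsic length completion. There are two cases.
\begin{enumerate}
\item If $\gamma$ avoids $B_\delta^E$, then its $h_j$-length is at most $(1+\eta_j)(d(p,q)+\epsilon)\le d(p,q)+\epsilon+\eta_j\Lambda'$.
\item If $\gamma$ enters $\overline B_\delta^E$, let $t_1$ be its first entry time and $t_2$ its last exit time. Replace the middle portion by a curve inside the core whose $d_j$-length is less than $\epsilon$.
\end{enumerate}
In the second case the outer pieces lie in $K_\delta$ and are compared as in the first case, so $d_j(p,q)\le(1+\eta_j)(d(p,q)+\epsilon)+\epsilon$. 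If an endpoint itself lies in the core, the same splitting applies.

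One subtlety is that $d_j$ is intrinsic on $M$, and $M$ has the boundary sphere $|z|=A$. Curves are nevertheless confined to $M$ and the boundary is contained in $K_\delta$, so the length comparison still holds there. The degenerate point $0$ is handled by working in the completion and approximating by points near $0$, which the core diameter bounds control.

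\emph{Step 3: $d\le d_j+O(\epsilon)$.} This is the symmetric argument. The one difference is that $h_j$-near-minimizers may pass through $0$, where $h_j$ is smooth; the same replacement inside $\overline B_\delta^E$, now with $\diam_d<\epsilon$, takes care of this.

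Combining Steps 2 and 3,
\[
 \sup|d_j-d|\le 3\epsilon+C\eta_j,
\]
and since $\epsilon$ is arbitrary the supremum tends to zero. The Gromov--Hausdorff statement follows at once, because the identity map is then an $o(1)$-isometry between the compact spaces. I expect the main obstacle to be the uniform bound $\Lambda$, together with the care needed in cutting curves at their first entry into and last exit from the core. Both reduce to the two diameter hypotheses and the compactness of $K_\delta$.
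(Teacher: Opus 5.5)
Your argument is correct and is essentially the paper's proof. The paper packages the same first-entry/last-exit cut into quotient pseudodistances $q_j,q$ that collapse $\overline B_\delta^E$ to a point, which yields $\sup|d_j-d|\le\diam_{d_j}(\overline B_\delta^E)+\diam_d(\overline B_\delta^E)+\epsilon_j\diam_d(M)$ directly and avoids needing a separate uniform bound on $\diam_{d_j}(M)$. One minor correction: the diameter hypothesis only provides a short replacement curve in $M$, not necessarily one inside the core, but your estimate uses only its length.
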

\begin{proof}
Fix \(0<\delta<A\). On \(M\), collapse \(K=\overline B_\delta^E\)
to one point and define the quotient length pseudodistances \(q_j,q\)
by charging the usual length on the annulus and zero for travel through
\(K\). A competitor needs to visit the collapsed point at most once:
discard everything between its first arrival and last departure. Thus a
near-minimizer consists either of an annular curve, or of two annular
curves and one free jump through \(K\). Replacing that jump by a curve
whose length is arbitrarily close to the distance between its endpoints
shows
\[
 0\le d_j-q_j\le\diam_{d_j}(K),\qquad
 0\le d-q\le\diam_d(K).
\]
The replacement curve is allowed to leave \(K\), but remains in \(M\),
as required by the definition of the intrinsic distance. Conversely,
discarding the portion of any ordinary curve between its first and last
visits to \(K\) proves \(q_j\le d_j\) and \(q\le d\).
On the fixed annulus, smooth convergence gives a length comparison
\((1-\epsilon_j)\operatorname{Length}_h\le
\operatorname{Length}_{h_j}\le
(1+\epsilon_j)\operatorname{Length}_h\), with \(\epsilon_j\to0\).
It follows that \((1-\epsilon_j)q\le q_j\le(1+\epsilon_j)q\).
Consequently
\[
 \sup|d_j-d|\le
 \diam_{d_j}(K)+\diam_d(K)+\epsilon_j\diam_d(M).
\]
First take the upper limit as \(j\to\infty\), and then let
\(\delta\downarrow0\).
\end{proof}

\subsection{Identification of all metric blow-downs}

\begin{theorem}[All metric blow-downs]\label{lim:tangents}
Let \(R_j\to\infty\), set
\[
 s_j=\log R_j^2,\quad \sigma_j=L+s_j,\quad
 \tau_j=\log(\sigma_j/L),\quad \ell_j=\rho(R_j),
 \qquad D_{R_j}(z)=R_jz.
\]
If \(\alpha(\tau_j)\to\alpha_\infty\) smoothly on
\(Y=\CP^{n-1}\), then
\begin{equation}\label{lim:metric-blowdown}
 (\C^n,\ell_j^{-2}g,0)
 \longrightarrow(C_{\alpha_\infty},o)
 \quad\text{in pointed Gromov--Hausdorff topology}.
\end{equation}
Under \(D_{R_j}\), the rescaled metrics converge smoothly on every
compact subset of \(\C^n\setminus\{0\}\). Every asymptotic cone
arises in this way, and every \(\alpha\in\mathcal A\) is realized.
\end{theorem}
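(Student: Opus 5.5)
The argument has three parts: smooth convergence of the pulled-back potentials on annuli, upgrading this to pointed Gromov--Hausdorff convergence via Lemma~\ref{lim:core-lemma}, and a compactness argument for the last two claims.

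\textbf{Smooth convergence on annuli.} Pull back by $D_{R_j}$ and divide by $\ell_j^2$. For $z$ in a fixed annulus $\{A^{-1}\le|z|\le A\}$ we have $s=s_j+\log|z|^2$, so
\[
 \sigma=\sigma_j\bigl(1+\log|z|^2/\sigma_j\bigr),\qquad
 \tau=\tau_j+O(\sigma_j^{-1}).
\]
By \eqref{cap:global-asymptotic},
\[
 \ell_j^{-2}D_{R_j}^*\mathcal U
 =|z|^{2a}\Bigl(\frac{\sigma}{\sigma_j}\Bigr)^{b}e^{\phi(\tau,[z])}
 +\ell_j^{-2}\bigl(C_L+h\bigr).
\]
The correction term is pluriharmonic-free for our purposes: $\ddc C_L=0$, and $\ell_j^{-2}h$ tends to zero in $C^\infty$ by \eqref{cap:h-estimate}. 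The factor $(\sigma/\sigma_j)^b$ tends to $1$ in $C^\infty$ of the annulus. By Lemma~\ref{geom:path}(iv), $\phi(\tau,\cdot)-\phi(\tau_j,\cdot)=O(\sigma_j^{-1})$ in every $C^k$, with all $z$-derivatives included since $\partial_s\tau=\sigma^{-1}$. Since $\alpha(\tau_j)\to\alpha_\infty$, continuity in Proposition~\ref{geom:transverse} gives $\phi(\tau_j)\to\phi_{\alpha_\infty}$. Hence $\ell_j^{-2}D_{R_j}^*\mathcal U\to V_{\alpha_\infty}$ in $C^\infty_{\rm loc}(\C^n\setminus\{0\})$, and the metrics converge smoothly there.

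\textbf{Pointed Gromov--Hausdorff convergence.} Balls of $\ell_j^{-1}$-radius $\Lambda$ must be compared with Euclidean balls. Proposition~\ref{lim:complete} and $\rho(R_jA)/\rho(R_j)\to A^a$ give
\[
 B_{\ell_j^{-2}g}(0,\Lambda)\subset D_{R_j}\bigl(B^E_{A(\Lambda)}\bigr)
\]
for large $j$, uniformly. On the limit side, $\sqrt{V_{\alpha_\infty}}$ is exactly the cone distance. To apply Lemma~\ref{lim:core-lemma} I need the shrinking-core estimate. By \eqref{lim:core-bounds} applied with $T=\delta R_j$,
\[
 \limsup_j \ell_j^{-1}\diam(\overline B^E_{\delta R_j})\le C\delta^a,
\]
and this tends to zero as $\delta\to0$. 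The cone core $\{\,|z|\le\delta\,\}$ has diameter at most $2\sup\sqrt{V}\to0$.

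\textbf{Main obstacle: intrinsic versus induced distances.} Lemma~\ref{lim:core-lemma} concerns intrinsic distances of $\overline B^E_A$, while pGH convergence requires the ambient distances. I would handle this by showing that minimizing curves between points of $B^E_{A}$ stay inside $B^E_{A'}$ for some $A'=A'(A)$. This follows from the radial lower bound \eqref{lim:distance}: a curve leaving $B^E_{A'}$ has rescaled length at least roughly $2c(A')^a$. That exceeds the diameter bound $C A^a+o(1)$ once $A'$ is large. The same statement holds on the cone. Apply the lemma on $\overline B^E_{A'}$ and restrict to $\overline B^E_A$; on the restriction the intrinsic distances coincide with the ambient ones. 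Since $A$ is arbitrary, this yields pointed convergence of balls, and hence \eqref{lim:metric-blowdown}.

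\textbf{Every cone arises, and every $\alpha\in\mathcal A$ is realized.} Any tangent cone at infinity comes from scales $r_j\to\infty$. By \eqref{lim:distance} and continuity of $\rho$, choose $R_j$ with $\rho(R_j)=r_j$. By Lemma~\ref{geom:path}(iii) the range of $\alpha$ has compact closure, so a subsequence has $\alpha(\tau_j)\to\alpha_\infty$, and $\alpha_\infty\in\mathcal A$ by (ii). The limit is then $C_{\alpha_\infty}$ by the first part. Conversely, for $\alpha\in\mathcal A$, Lemma~\ref{geom:path}(ii) provides $\tau_j\to\infty$ with $\alpha(\tau_j)\to\alpha$. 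Setting $R_j=\exp\bigl((Le^{\tau_j}-L)/2\bigr)$ realizes $C_\alpha$.
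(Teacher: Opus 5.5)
Your proposal is correct and follows essentially the same route as the paper: smooth convergence of the rescaled potential on annuli, the shrinking-core bound from \eqref{lim:core-bounds} fed into Lemma~\ref{lim:core-lemma}, confinement of minimizing curves via \eqref{lim:distance} to identify intrinsic and ambient distances, and compactness of the path for exhaustion and realization. The one step to state precisely is the confinement inequality: a curve through $|x|=A'$ has rescaled length at least $2c(A')^a-2CA^a-o(1)$, and this must exceed the ambient diameter bound $2CA^a+o(1)$, which still holds once $A'$ is large.
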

\begin{proof}
Let \(g_j=\ell_j^{-2}D_{R_j}^*g\), a complete metric on a fixed copy
of \(\C^n\). On a fixed closed coordinate annulus the exterior part
of its potential is exactly
\begin{align}\label{lim:potential-rescaling}
 \ell_j^{-2}U_{\mathrm{ext}}(R_jz)
 ={}&|z|^{2a}
 \left(1+\frac{\log|z|^2}{\sigma_j}\right)^b\notag\\
 &\cdot\exp\left[
 \phi\left(\tau_j+
 \log\left(1+\frac{\log|z|^2}{\sigma_j}\right),[z]\right)
 \right].
\end{align}
The added phase and all its spatial derivatives tend uniformly to zero.
Uniform mixed derivative bounds for the path, together with the smooth
dependence of \(\phi_\alpha\) on \(\alpha\), imply that
\eqref{lim:potential-rescaling} converges smoothly to
\[
 U_{\alpha_\infty}(z)=|z|^{2a}e^{\phi_{\alpha_\infty}([z])}.
\]
The cap correction \(h(s_j+\log|z|^2)/\ell_j^2\), and all its
fixed-order derivatives, tend to zero by Proposition~\ref{cap:global}.
The added constant has no effect on the metric. This proves smooth
convergence of \(g_j\) to the cone metric \(g_\infty\) on annuli.

Fix \(A>0\) and use intrinsic distances on \(\overline B_A^E\).
The radial paths in Proposition~\ref{lim:complete} give, for
\(0<\delta<\min\{1,A\}\),
\begin{equation}\label{lim:small-core}
 \limsup_{j\to\infty}
 \diam_{g_j}^{\mathrm{intr}}(\overline B_\delta^E)
 \le C\delta^a.
\end{equation}
Indeed, the fixed core contributes \(D/\ell_j\to0\), and, whenever
\(\delta R_j\ge1\),
\[
 \frac{\rho(\delta R_j)}{\rho(R_j)}
 =\delta^a
 \left(1+\frac{\log\delta^2}{\sigma_j}\right)^{b/2}
 \le\delta^a.
\]
The cone has the analogous estimate by homogeneity. Its puncture is
completed by a single point, whereas each \(g_j\) is already smooth
there. Lemma~\ref{lim:core-lemma} now proves uniform convergence of
intrinsic distance functions on \(\overline B_A^E\).

It remains to relate these intrinsic distances to the ambient ones.
For each fixed \(A>1\), \eqref{lim:distance} gives
\[
 \inf_{|z|=A}d_{g_j}(0,z)
 \ge c\frac{\rho(AR_j)}{\rho(R_j)}-\frac D{\ell_j}.
\]
Since $\rho(AR_j)/\rho(R_j)\to A^a$, it follows that
\begin{equation}\label{lim:confinement}
 \liminf_{j\to\infty}\inf_{|z|=A}d_{g_j}(0,z)\ge cA^a.
\end{equation}
The cone has the same type of lower bound. Given a metric radius
\(Q>0\), choose \(A\) so large that both boundary distances are
greater than \(10Q\), for all sufficiently large \(j\). Every
minimizing geodesic between two points of \(\overline B_{g_j}(0,Q)\)
stays in \(B_{g_j}(0,3Q)\): its length is at most \(2Q\).
It consequently stays in \(B_A^E\). The same argument applies to the
cone. Thus ambient and intrinsic distances agree for the relevant points.
The uniform intrinsic convergence identifies the radius functions as
well. Balls of radius \(Q\) are then approximated using the balls of
radii \(Q\pm\epsilon_j\); geodesic segments from the origin show
that their Hausdorff discrepancy is at most \(O(\epsilon_j)\).
This proves \eqref{lim:metric-blowdown} on every bounded metric ball.

Finally \(\rho\) is strictly increasing to infinity by
\eqref{lim:rho-derivative}. Hence every sequence of distance scales
\(\ell_j\to\infty\) can be written uniquely as \(\ell_j=\rho(R_j)\)
for sufficiently large \(j\). The path has relatively compact image
and cluster set exactly \(\mathcal A\), by Lemma~\ref{geom:path}.
After passing to a subsequence its phases converge to some
\(\alpha_\infty\in\mathcal A\), and the preceding argument applies.
Conversely, a phase subsequence approaching any prescribed member of
\(\mathcal A\) gives a sequence \(R_j\to\infty\). This proves both
exhaustion and realization.
\end{proof}

\subsection{Ricci currents, volume growth, and curvature decay}

\begin{proposition}[Ricci currents under blow-down]\label{lim:ricci}
Set \(f_g=-\log\det(\mathcal U_{j\bar k})\). Along the subsequence
in Theorem~\ref{lim:tangents}, the normalized potentials
\begin{equation}\label{lim:normalized-ricci}
 F_j(z)=f_g(R_jz)-\kappa s_j+nb\log\sigma_j
\end{equation}
converge in \(L^1_{\mathrm{loc}}(\C^n)\) and smoothly off zero to
\[
 f_{\alpha_\infty}(z)=
 \kappa\log|z|^2+\psi_{\alpha_\infty}([z])-(n+1)\log a.
\]
Consequently
\begin{equation}\label{lim:ricci-current}
 D_{R_j}^*\Ric(\omega)\rightharpoonup
 \ddc f_{\alpha_\infty}
\end{equation}
as currents on \(\C^n\). Off the vertex this is precisely
\(\pi^*\alpha_\infty=\Ric(\omega_{\alpha_\infty})\).
\end{proposition}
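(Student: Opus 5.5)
We prove convergence of $F_j$ separately on three regions: the exterior end, where we use the exact decomposition \eqref{ext:f} together with the correction $J$; the cap region, where we only need crude bounds; and then we combine the two in $L^1_{\mathrm{loc}}$. The current convergence \eqref{lim:ricci-current} then follows because $\Ric(\omega)=\ddc f_g$ and pullback commutes with $\ddc$. Indeed, $D_{R_j}^*\Ric(\omega)=\ddc(f_g\circ D_{R_j})=\ddc F_j$, since $F_j$ differs from $f_g\circ D_{R_j}$ by a constant. Hence $L^1_{\mathrm{loc}}$ convergence $F_j\to f_{\alpha_\infty}$ gives weak convergence of $\ddc F_j$ to $\ddc f_{\alpha_\infty}$ by pairing with $\ddc$ of test forms. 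The identification off the vertex with $\pi^*\alpha_\infty$ is \eqref{geom:cone-current}.

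\emph{Exterior region.} Fix a compact annulus $K\subset\C^n\setminus\{0\}$. For large $j$, the points $R_jz$ with $z\in K$ satisfy $s>S_0$. By Proposition~\ref{cap:global} and \eqref{cap:J}, on this set we have $f_g=f_{\rm ext}+J$. Write $s=s_j+\log|z|^2$, so that $\sigma=\sigma_j(1+\log|z|^2/\sigma_j)$. Then \eqref{ext:f} gives
\[
F_j(z)=\kappa\log|z|^2-nb\log\Bigl(1+\frac{\log|z|^2}{\sigma_j}\Bigr)+\Psi(\tau,[z])+E(s,[z])+J(s,[z]).
\]
Here $\tau=\tau_j+\log(1+\log|z|^2/\sigma_j)$. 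As $j\to\infty$:
\begin{enumerate}
\item the second term and all its derivatives tend to zero on $K$;
\item $E$ and $J$ tend to zero in $C^\infty(K)$ by \eqref{ext:error} and \eqref{cap:J-derivatives}. We must convert $s$-derivatives and $\nabla_Y$-derivatives into Euclidean derivatives on $K$; this only costs bounded factors, as in the proof of Theorem~\ref{lim:tangents};
\item $\Psi(\tau,\cdot)\to\psi_{\alpha_\infty}-(n+1)\log a$ smoothly. This follows from the uniform $\tau$-derivative bounds of Lemma~\ref{geom:path}, which make the phase shift irrelevant, together with the continuity of $\alpha\mapsto\psi_\alpha$.
\end{enumerate}
Together these give smooth convergence of $F_j$ to $f_{\alpha_\infty}$ off zero.

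\emph{Uniform bound near zero.} This is the step I expect to need the most care. We need a domination of $F_j$ on the ball $|z|\le\delta$ that is uniform in $j$. I would show
\[
|F_j(z)|\le C\bigl(1+|\log|z|^2|\bigr)\qquad\text{on } |z|\le 1,
\]
which is integrable in $\C^n$. We split the ball into three parts.
\begin{enumerate}
\item Where $R_j|z|\ge1$, the determinant comparison \eqref{lim:comparison} gives
\[
f_g(R_jz)=\kappa\log(R_j^2|z|^2)-nb\log\sigma(R_j|z|)+O(1).
\]
Since $1\le\sigma(R_j|z|)\le\sigma_j$, this yields
\[
|F_j(z)|\le\kappa|\log|z|^2|+nb\log\frac{\sigma_j}{\sigma(R_j|z|)}+C.
\]
\item The logarithmic ratio is the delicate point. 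Using $\sigma(R_j|z|)\ge L$, we get $\log(\sigma_j/\sigma(R_j|z|))\le\log(\sigma_j/L)$, which only behaves like $\log\log R_j$. For a sharper bound, write $\sigma_j/\sigma(R_j|z|)=\sigma_j/(\sigma_j+\log|z|^2)$. On the part where $\log|z|^2\ge-\sigma_j/2$ this ratio is at most $2$. On the rest, which is contained in $|z|\le R_j^{-1/2}$, we use the crude bound $O(\log\sigma_j)+\kappa|\log|z|^2|$. That set has volume $O(R_j^{-n})$, so its contribution to the $L^1$ norm tends to zero.
\item Where $R_j|z|<1$, $f_g$ is bounded by smoothness on the core. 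Hence $|F_j|\le\kappa s_j+nb\log\sigma_j+C$ on a set of volume $O(R_j^{-2n})$, and this contribution also vanishes.
\end{enumerate}

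\emph{Conclusion.} Combining the smooth convergence on annuli with these tail estimates and dominated convergence on $|z|\ge\delta$ gives $F_j\to f_{\alpha_\infty}$ in $L^1_{\mathrm{loc}}(\C^n)$, since $f_{\alpha_\infty}$ is itself locally integrable. The current convergence stated in the proposition follows as explained at the start.
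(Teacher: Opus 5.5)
Your proposal is correct and follows essentially the same route as the paper: the decomposition $f_g=f_{\rm ext}+J$ with \eqref{ext:f} gives smooth convergence off zero, the determinant comparison gives a logarithmic bound near the origin, and $L^1_{\mathrm{loc}}$ convergence passes to $\ddc F_j=D_{R_j}^*\Ric(\omega)$. The only difference is minor: the paper proves the single majorant $C(1+|\log|z||)$ on the whole ball and applies dominated convergence, whereas you show that the $L^1$ contributions of your two small-volume sets tend to zero. Your two exceptional sets need no separate treatment, because $\log(\sigma_j/(\sigma_j+t))\le|t|/L$ whenever $s_j+t\ge0$, and $|t|>s_j$ on the core; still, your uniform-integrability argument is also valid.
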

\begin{proof}
The determinant expansion and cap estimates give, for \(|w|\ge1\),
\begin{equation}\label{lim:global-f-asymptotic}
 f_g(w)=\kappa\log|w|^2-nb\log(L+\log|w|^2)+O(1).
\end{equation}
The remainder is uniformly bounded; on a fixed intermediate annulus this
also follows from smooth positivity. At infinity its precise expression
is \(\Psi(\tau,[w])+E+J\), where \(\Psi=\psi-(n+1)\log a\)
and \(E,J\to0\) with the needed derivatives. Thus smooth convergence
to the stated potential on punctured compact sets follows directly,
including the normalization constant.

For convergence across the origin we give a uniform integrable majorant.
Write \(t=\log|z|^2\) and take \(s_j\ge1\). If \(|R_jz|\ge1\),
then \(s_j+t\ge0\), and \eqref{lim:global-f-asymptotic} gives
\[
 F_j(z)=\kappa t-nb\log\left(1+\frac t{\sigma_j}\right)+O(1).
\]
If \(t\ge0\), the absolute value of the logarithm is at most \(t/L\).
If \(-s_j\le t<0\), it is at most \(|t|/L\), since
\[
 \log\frac{\sigma_j}{\sigma_j+t}
 =\int_{\sigma_j+t}^{\sigma_j}\frac{du}{u}
 \le\frac{|t|}{L}.
\]
If instead \(|R_jz|<1\), the smooth function \(f_g\) is bounded on
\(\overline B_1^E\), and \(-t>s_j\). Therefore
\[
 |F_j(z)|\le C+\kappa s_j+nb\log(L+s_j)
 \le C'(1+|t|).
\]
Combining the cases proves, on every fixed coordinate ball and almost
everywhere,
\begin{equation}\label{lim:log-majorant}
 |F_j(z)|\le C(1+|\log|z||).
\end{equation}
The right-hand side is locally integrable for the Euclidean measure.
Dominated convergence gives \(L^1_{\mathrm{loc}}\) convergence, and
continuity of distributional differentiation proves
\eqref{lim:ricci-current}. In particular no current supported only at
the vertex can be lost in this argument.

There is no factor \(\ell_j^{-2}\) in \eqref{lim:ricci-current}:
\(\Ric(c\omega)=\Ric(\omega)\) for every constant \(c>0\).
In fact, the complex Hessian of the rescaled potential is
$\ell_j^{-2}R_j^2(\mathcal U_{p\bar q}(R_jz))$, so its
negative logarithmic determinant is exactly
\[
 f_g(R_jz)+2n\log\ell_j-2n\log R_j=F_j(z).
\]
\end{proof}

\begin{proposition}[Exact asymptotic volume ratio]\label{lim:volume}
Let \(v_{2n}\) be the Euclidean unit-ball volume. Then
\begin{equation}\label{lim:avr}
 \lim_{R\to\infty}\frac{\Vol_g B_g(0,R)}{v_{2n}R^{2n}}=a^n.
\end{equation}
Moreover the global, all-center noncollapsing estimate
\begin{equation}\label{lim:noncollapse}
 \Vol_g B_g(p,R)\ge a^n v_{2n}R^{2n}
 \qquad(p\in\C^n,\ R>0)
\end{equation}
holds.
\end{proposition}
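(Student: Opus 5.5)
The plan is to compute the limit along the blow-down scales of Theorem~\ref{lim:tangents} and then deduce the all-center bound from Bishop--Gromov monotonicity. By Bishop--Gromov with $\Ric(g)\ge0$ (Proposition~\ref{cap:global}), the ratio $\Theta(R)=\Vol_gB_g(0,R)/(v_{2n}R^{2n})$ is nonincreasing in $R$. Its limit $\Theta_\infty$ therefore exists, and it suffices to evaluate it along one sequence $R=\ell_j\to\infty$.

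First choose $R_j\to\infty$ such that $\alpha(\tau_j)$ converges, after passing to a subsequence as in Theorem~\ref{lim:tangents}, and set $\ell_j=\rho(R_j)$. Then
\[
\Theta(\ell_j)=\frac{\Vol_{g_j}B_{g_j}(0,1)}{v_{2n}},\qquad g_j=\ell_j^{-2}D_{R_j}^*g.
\]
We show $\Vol_{g_j}B_{g_j}(0,1)\to\Vol B_{C_{\alpha_\infty}}(o,1)=a^nv_{2n}$, using \eqref{geom:cone-volume}. The argument avoids Colding's volume convergence theorem and uses the explicit structure directly. Fix $\delta>0$. On the annulus $\{\delta\le|z|\le A\}$, the metrics $g_j$ converge smoothly to the cone metric, and the distance functions $d_{g_j}(0,\cdot)$ converge uniformly to $\rho_\infty=\sqrt{U_{\alpha_\infty}}$ on $\overline B^E_A$ (proof of Theorem~\ref{lim:tangents}, with $A$ chosen by \eqref{lim:confinement} so that both unit balls lie in $B_A^E$). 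Hence the indicator functions of the unit balls converge almost everywhere on the annulus; the exceptional set is the sphere $\{\rho_\infty=1\}$, which has measure zero. The densities also converge uniformly there, so dominated convergence gives convergence of the volumes of the parts of the balls lying in the annulus.

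The main obstacle is the inner piece $B_\delta^E$, where $g_j$ does not converge to the cone metric. Here we use \eqref{lim:core-bounds} rescaled: for $\delta R_j\ge1$,
\[
\Vol_{g_j}(B^E_\delta)\le \ell_j^{-2n}\bigl(D+C\rho(\delta R_j)^{2n}\bigr)\le D\ell_j^{-2n}+C\delta^{2na},
\]
using $\rho(\delta R_j)/\rho(R_j)\le\delta^a$ as in \eqref{lim:small-core}. The cone satisfies the analogous bound $C\delta^{2na}$ by homogeneity. Letting $j\to\infty$ and then $\delta\to0$ gives $\Theta(\ell_j)\to a^n$, and hence \eqref{lim:avr}.

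For \eqref{lim:noncollapse}, let $p\in\C^n$. By Bishop--Gromov centered at $p$, $\Vol B_g(p,R)/(v_{2n}R^{2n})$ is nonincreasing in $R$, so it is at least its limit as $R\to\infty$. This limit is independent of the center, because $B_g(0,R-d)\subset B_g(p,R)\subset B_g(0,R+d)$ with $d=d_g(0,p)$, and $(R\pm d)^{2n}/R^{2n}\to1$. It therefore equals $a^n$, which proves the bound for every $p$ and every $R>0$.
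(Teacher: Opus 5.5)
Your proposal is correct and follows the paper's proof essentially step for step. Both arguments use Bishop--Gromov monotonicity at the origin and evaluate the ratio along the blow-down scales $\ell_j=\rho(R_j)$. On annuli they combine smooth convergence with a.e.\ convergence of the ball indicators, the limiting unit sphere being null. The core $B^E_\delta$ is handled by the rescaled bound $C\delta^{2na}$ from \eqref{lim:core-bounds}, and the all-center estimate follows from the inclusions $B_g(0,R-d)\subset B_g(p,R)\subset B_g(0,R+d)$ together with monotonicity.
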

\begin{proof}
Choose a subsequence as in Theorem~\ref{lim:tangents}. The volume estimate
in \eqref{lim:core-bounds}, with \(g_j=\ell_j^{-2}D_{R_j}^*g\), gives
\begin{equation}\label{lim:core-volume}
 \limsup_{j\to\infty}\Vol_{g_j}(B_\delta^E)
 \le C\delta^{2na}\qquad(0<\delta<1).
\end{equation}
Indeed, for \(\delta R_j\ge1\) the left side before taking limits is
bounded by
\[
 \frac D{\ell_j^{2n}}+
 C\delta^{2na}
 \left(1+\frac{\log\delta^2}{\sigma_j}\right)^{nb}
 \le\frac D{\ell_j^{2n}}+C\delta^{2na}.
\]
When \(\delta R_j<1\), the fixed core volume divided by
\(\ell_j^{2n}\) suffices.

By the confinement and distance convergence just proved, all unit metric
balls lie in one fixed coordinate ball \(B_A^E\), and their radius
functions converge uniformly there wherever needed. On
\(\overline B_A^E\setminus B_\delta^E\), the volume forms converge
smoothly. The limiting unit sphere is the smooth hypersurface
\(U_{\alpha_\infty}=1\), so it has zero limiting volume. Thus
convergence of indicator functions almost everywhere and convergence of
the smooth densities imply convergence of unit-ball volumes outside
\(B_\delta^E\). Letting \(\delta\downarrow0\) and using
\eqref{lim:core-volume} proves
\[
 \ell_j^{-2n}\Vol_g B_g(0,\ell_j)
 \longrightarrow
 \Vol_{C_{\alpha_\infty}} B(o,1)=a^n v_{2n},
\]
where the last equality is the Hopf-cone volume formula
\eqref{geom:cone-volume}. This argument uses ordinary smooth
convergence and the explicit core estimate, rather than a separate
Ricci-limit volume convergence theorem.

Since \(g\) is complete with \(\Ric(g)\ge0\), Bishop--Gromov
comparison states that
\(\Vol_g B_g(p,R)/(v_{2n}R^{2n})\) is nonincreasing in \(R\);
see, for example, \cite{Petersen2016}. Its limit at \(p=0\) therefore
exists, and the computed subsequence identifies it as \(a^n\).
For arbitrary \(p\), set \(d=d_g(0,p)\). The inclusions
\[
 B_g(0,R-d)\subset B_g(p,R)\subset B_g(0,R+d)
 \qquad(R>d)
\]
show that the limiting ratio at \(p\) is also \(a^n\). Monotonicity
then implies \eqref{lim:noncollapse} at every finite radius.
\end{proof}

\begin{corollary}[Quadratic curvature decay]\label{lim:curvature}
The constructed metric satisfies
\[
 |\operatorname{Rm}(g)|_g(z)
 \le\frac C{(1+d_g(0,z))^2}.
\]
\end{corollary}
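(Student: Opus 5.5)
The decay estimate splits into a compact part and an end. On any compact set $|\Rm|$ is bounded and $d_g(0,z)$ is bounded, so only the region $|z|\ge r_0$ for a large fixed $r_0$ needs work. By \eqref{lim:distance}, $1+d_g(0,z)\asymp\rho(|z|)$ there, so the aim is to prove
\[
 |\Rm(g)|_g(z)\le C\rho(|z|)^{-2}.
\]
The plan is a scaling argument based on the uniform smooth control of the rescaled potentials already used in Theorem~\ref{lim:tangents}.

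Fix $z$ with $|z|=R$ large and put $g_R=\rho(R)^{-2}D_R^*g$ on the fixed annulus $\mathcal K=\{1/2\le|w|\le2\}$. Scaling gives $|\Rm(g)|_g(Rw)=\rho(R)^{-2}|\Rm(g_R)|_{g_R}(w)$. It therefore suffices to prove that $|\Rm(g_R)|_{g_R}$ is bounded on $\{|w|=1\}$ uniformly in $R$.

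To do this, I would show that the family of potentials $\rho(R)^{-2}\mathcal U(Rw)$ (minus constants) is bounded in $C^k(\mathcal K)$ for each $k$, and that the metrics $g_R$ are uniformly equivalent to $g_E$ there.
\begin{enumerate}
\item Write the rescaled exterior potential exactly as in \eqref{lim:potential-rescaling}. The factor $(1+\log|w|^2/\sigma)^b$ and the phase $\tau+\log(1+\log|w|^2/\sigma)$ have all $w$-derivatives bounded uniformly in $\sigma\ge L$. The $\tau$-derivatives of $\phi$ are uniformly bounded by Lemma~\ref{geom:path}(iv). Hence every fixed-order derivative is bounded uniformly in $R$.
\item The cap correction $h(s_R+\log|w|^2)/\rho(R)^2$ has all derivatives tending to zero, by Lemma~\ref{cap:error}.
\item Uniform equivalence to $g_E$ follows from \eqref{lim:comparison}, since $w(R|w|)^2R^2/\rho(R)^2\asymp1$ on $\mathcal K$.
\end{enumerate}
Curvature of a Kähler metric is a universal expression in the inverse metric and the derivatives of $\mathcal U_{j\bar k}$ up to order two. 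So $|\Rm(g_R)|_{g_R}\le C$ on $\{|w|=1\}$, with $C$ independent of $R$.

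The step needing care is the uniformity of the bounds in (i) over all $R$, not just along convergent subsequences. This rests only on the uniform path bounds of Lemma~\ref{geom:path} and on $\sigma\ge L$, not on convergence of the $\alpha(\tau_j)$, so no compactness argument is needed. Alternatively, one could argue by contradiction: pass to a subsequence with $\alpha(\tau_j)\to\alpha_\infty$, invoke the smooth convergence on annuli from Theorem~\ref{lim:tangents}, and use that the cone $C_{\alpha_\infty}$ has bounded curvature on its unit sphere. The direct bound avoids this extra step.
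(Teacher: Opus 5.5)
Your proposal is correct and follows the paper's proof: the rescaled metrics $\rho(R)^{-2}D_R^*g$ on the fixed annulus $\{1/2\le|w|\le2\}$ are uniformly controlled in every $C^k$ norm, using \eqref{lim:potential-rescaling}, the mixed $\tau$-derivative bounds of Lemma~\ref{geom:path}, and the cap estimates, with no phase subsequence. One then scales back and uses \eqref{lim:distance} to replace $\rho(R)$ by distance, exactly as the paper does. Your checks of uniform equivalence to $g_E$ via \eqref{lim:comparison} and of the compact region are details the paper leaves implicit.
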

\begin{proof}
For every sufficiently large \(R\), the metrics
\(\rho(R)^{-2}D_R^*g\) on \(\{1/2\le|z|\le2\}\) are uniformly
positive, with uniformly bounded derivatives of every fixed order.
This follows directly from \eqref{lim:potential-rescaling}, the bounded
mixed derivatives of the path, and the cap estimates; no phase subsequence
is needed for these bounds. Their curvature norms are uniformly bounded.
Scaling back at \(|z|=R\) gives
\(|\operatorname{Rm}(g)|_g\le C\rho(R)^{-2}\).
Use \eqref{lim:distance} to replace \(\rho(R)\) by distance for large
\(R\); smoothness on the remaining compact set completes the proof.
\end{proof}

\subsection{Proofs of the main results}

\begin{proof}[Proof of Theorem~\ref{main:theorem}]
Lemma~\ref{geom:path} supplies the path with prescribed cluster set.
Proposition~\ref{cap:global} gives a globally smooth K\"ahler metric
with nonnegative Ricci curvature, and Proposition~\ref{lim:complete}
proves its completeness. Theorem~\ref{lim:tangents} realizes and exhausts
all the asserted asymptotic cones. Proposition~\ref{lim:ricci} identifies
their Ricci currents, and Proposition~\ref{lim:volume} gives the exact
volume ratio and uniform noncollapsing. The pointed isometry criterion
in Theorem~\ref{geom:classification} gives the stated quotient of \(\mathcal A\).
In particular, whenever \(\mathcal A\) has two inequivalent members,
the complete manifold has nonisometric tangent cones at infinity.
\end{proof}

\begin{proof}[Proof of Theorem~\ref{main:nonunique}]
Use the compact connected family from Corollary~\ref{geom:nonunique}
as the input \(\mathcal A\) in Theorem~\ref{main:theorem}.
Its endpoint cones are not pointed isometric, and both occur as
tangent cones at infinity. The theorem supplies all the other
asserted properties.
\end{proof}

\section{Acknowledgements}
The authors would like to thank their supervisors, Wenshuai Jiang and Gang Liu, for suggesting this interesting and challenging problem. The authors also want to thank professor G\'abor Sz\'ekelyhidi for drawing our attention to the related work of Arka Karmakar \cite{Karmakar2026}. The authors first learned of the problem two years ago and have since discussed it frequently with them. The problem was fully resolved only recently, and the authors are deeply grateful for their constant encouragement and support.

This work was supported by the National Key Research and Development Program of
China (No. 2022YFA1005500).

\end{document}